\newcommand{\excise}[1]{}
\newtheorem{thm}{Theorem}[section]
\newtheorem{lemma}[thm]{Lemma}
\newtheorem{cor}[thm]{Corollary}
\newtheorem{prop}[thm]{Proposition}
\newtheorem{mainquestion}[thm]{Main Question}
\theoremstyle{definition}
\newtheorem{alg}[thm]{Algorithm}
\newtheorem{example}[thm]{Example}
\newtheorem{remark}[thm]{Remark}
\newtheorem{defn}[thm]{Definition}
\numberwithin{equation}{section}
\newcommand{\ring}[1]{\ensuremath{\mathbb{#1}}}
\renewcommand\>{\rangle}
\newcommand\NN{\ring{N}}
\newcommand\QQ{\ring{Q}}
\newcommand\RR{\ring{R}}
\newcommand\ZZ{\ring{Z}}
\newcommand\til{\mathord\sim}
\def\ol#1{{\overline {#1}}}
\DeclareMathOperator\spann{span} 
\DeclareMathOperator\Ap{Ap} 
\DeclareMathOperator\supp{supp} 
\DeclareMathOperator\rk{rk}
\DeclareMathOperator\Sat{Sat}
\DeclareMathOperator\Row{Row}
\newcommand{\dfloor}{\lfloor \tfrac{1}{2}(m-1)\rfloor}
\newcommand{\Cm}{\mathcal C_m}
\begin{document}

\mbox{}
\title[On faces of the Kunz cone and the numerical semigroups within them]{On faces of the Kunz  cone and the \\ numerical semigroups within them}

\author[Borevitz]{Levi Borevitz}
\address{Department of Mathematics\\Northwestern University\\Evanston, IL 60208}
\email{leviborevitz2023@u.northwestern.edu}

\author[Gomes]{Tara Gomes}
\address{School of Mathematics\\University of Minnesota\\Minneapolis, MN 55455}
\email{gomes072@umn.edu}

\author[Ma]{Jiajie Ma}
\address{Department of Mathematics\\Duke University\\Durham, NC 27708}
\email{jm847@math.duke.edu}

\author[Niergarth]{Harper Niergarth}
\address{School of Mathematics\\University of Minnesota\\Minneapolis, MN 55455}
\email{nierg001@umn.edu}

\author[O'Neill]{Christopher O'Neill}
\address{Mathematics and Statistics Department\\San Diego State University\\San Diego, CA 92182}
\email{cdoneill@sdsu.edu}

\author[Pocklington]{Daniel Pocklington}
\address{Department of Mathematics\\Grinnell College\\Grinnell, IA 50112}
\email{pockling@grinnell.edu}

\author[Stolk]{Rosa Stolk}
\address{Department of Mathematics\\Utrecht University\\Utrecht, Netherlands}
\email{rosasofiestolk@gmail.com}

\author[Wang]{Jessica Wang}
\address{Department of Mathematics\\Humboldt University of Berlin\\Berlin, Germany}
\email{jeswang1015@gmail.com}

\author[Xue]{Shuhang Xue}
\address{Department of Mathematics\\University of Virginia\\Charlottesville, VA 22903}
\email{hl6de@virginia.edu}

\date{\today}

\begin{abstract}
A numerical semigroup is a cofinite subset of the non-negative integers that is closed under addition and contains 0.  Each numerical semigroup $S$ with fixed smallest positive element $m$ corresponds to an integer point in a rational polyhedral cone $\Cm$, called the Kunz cone.  Moreover, numerical semigroups corresponding to points in the same face $F \subseteq \Cm$ are known to share many properties, such as the number of minimal generators.  
In~this work, we classify which faces of $\Cm$ contain points corresponding to numerical semigroups.  Additionally, we obtain sharp bounds on the number of minimal generators of $S$ in terms of the dimension of the face of $\Cm$ containing the point corresponding to $S$.  
\end{abstract}

\maketitle


\section{Introduction}
\label{sec:intro}

A \emph{numerical semigroup} is a cofinite subset $S \subseteq \NN$ of the non-negative integers that is closed under addition and contains $0$.  
Of recent interest is a family of rational cones~$\Cm$, called \emph{Kunz cones}, with the property that each numerical semigroup $S$ with smallest positive element $m$ corresponds to an integer point in $\Cm$.  
Inspired by a family of polyhedra introduced by Kunz~\cite{kunz}, these cones have predominently been used to apply lattice point counting techniques (e.g., Ehrhart's theorem) to the enumeration of numerical semigroups~\cite{alhajjarkunz,kunzcoords}, in hopes of making progress on some long-standing open problems in the numerical semigroups literature, such as Wilf's conjecture~\cite{wilfmultiplicity,wilfsurvey,kaplanwilfconj,wilfconjecture} and the Bras-Amoros conjecture concerning the number of numerical semigroups with a given genus~\cite{wilfbruteforce,kaplancounting}.  

More recently, a connection between the faces of $\Cm$ and the numerical semigroups therein was discovered~\cite{wilfmultiplicity,kunzfaces1}.  To each face $F \subseteq \Cm$, a finite poset $P$ can be naturally associated (called the \emph{Kunz poset} of $F$).  If a numerical semigroup $S$ corresponds to a point in $F$ (we often say ``$F$ contains $S$'' for simplicity), then $P$ coincides with a certain subset of the divisibility poset of $S$.  Note that not all faces of $\Cm$ contain numerical semigroups, but every face has an asociated Kunz poset.  
This construction was later enhanced to associate to each $F$ a nilsemigroup $N$, called the \emph{Kunz nilsemigroup} of $F$, in such a way that (i) the Kunz poset $P$ of $F$ is the divisibility poset of the non-nil elements of $N$, and (ii) if a numerical semigroup $S$ lies in $F$, then $N$ is the quotient of $S$ by a certain semigroup congruence~\cite{minprescard,kunzfaces3}.  

An interesting consequence of the above is that the faces of $\Cm$ naturally partition the (infinite) collection of numerical semigroups with smallest positive element $m$ into finitely many equivalence classes with similar algebraic properties.  For instance, if two numerical semigroups lie in the same face of $\Cm$, then they have the same number of minimal generators (it is one more than the number of minimal generators of the Kunz nilsemigroup $N$), and the first Betti numbers of their defining toric ideals coincide~\cite{kunzfaces3}.  In~this way, one may view the Kunz cones as a sort of ``moduli space of numerical semigroups'' though this has yet to be made precise~\cite{modulispaces}.  

In this paper, we answer two natural questions.  

\begin{mainquestion}\label{mq:aperyfaces}
When does a given face $F \subseteq \Cm$ contain numerical semigroups?  
\end{mainquestion}

We answer Main Question~\ref{mq:aperyfaces} in terms of the Kunz nilsemigroup of $F$ (Theorem~\ref{t:main}).  Our characterization is both combinatorial and algebraic in nature, involving both nilsemigroups and the saturation of certain integer lattices, and yields an algorithm for testing whether a given face $F$ contains numerical semigroups.  We also provide some examples that highlight the difficulty in obtaining a purely combinatorial (e.g., poset-theoretic or nilsemigroup-theoretic) characterization.  

One of the applications of Theorem~\ref{t:main} lies in streamlining the process of proving the existence of numerical semigroups with certain properties.  
The central theme of~\cite{rosalesapery,highembdim}, and the recent manuscript~\cite{minprescard}, is locating a numerical semigroup with prescribed smallest positive element $m$, number of minimal generators, and first Betti number.  The latter article~\cite{minprescard} made significant headway on this problem by first constructing a Kunz nilsemigroup $N$, from which all 3 quantities are determined, and then locating a numerical semigroup $S$ lying in the face of $\Cm$ corresponding to $N$.  
Due to forthcoming work of the second and fifth authors of the present manuscript that proves all higher Betti numbers also coincide for numerical semigroups in the same face, and the broader interest these quantities hold (see the survey~\cite{nsbettisurvey}), it is worthwhile to streamline the process of proving there exists a numerical semigroup withh a given Kunz nilsemigroup.  

Presently, after identifying a Kunz nilsemigroup $N$ with the desired properties, one must locate a numerical semigroup in the face corresponding to $N$, the construction of which is often technical and cumbersome due to certain modular arithmetic restrictions that must be satisfied.  
Theorem~\ref{t:main} provides a different (and simpler) avenue:\ after locating the desired Kunz nilsemigroup $N$, prove $N$ corresponds to a face $F \subseteq \Cm$, then prove $F$ contains numerical semigroups.  
We demonstrate the utility of this approach in our answer to the following question.  
Let $\mathsf e(F)$ denote the number of minimal generators of the Kunz nilsemigroup of $F$ (if $F$ contains a numerical semigroup $S$, then $\mathsf e(F)$ is one less than the number of minimal genreators of $S$).  

\begin{mainquestion}\label{mq:edpairs}
Given $d$ and $m$, what are the possible values of $\mathsf e(F)$ for $F \subseteq \Cm$ with $d = \dim F$?  
\end{mainquestion}

It is known~\cite{kunzfaces1} that $\dim F \le \mathsf e(F)$.  
We obtain, for each face $F$ with $\dim F \ge 2$, sharp bounds on $\mathsf e(F)$ in terms of $\dim F$.  We also identify a family of faces for each $m$ that demonstrate every intermediate value of $\mathsf e(F)$ between these bounds is attained.  Our proof of the latter is made more concise and less technical thanks to Theorem~\ref{t:main} (we encourage the reader to compare them to the constructions given in~\cite{minprescard}).  We~also identify when the faces in this family contain numerical semigroups, and for those that do not, we prove that locating such a face that does contain numerical semigroups is impossible (again using Theorem~\ref{t:main}).  

Our answers to Main Questions~\ref{mq:aperyfaces} and~\ref{mq:edpairs} can be found in Sections~\ref{sec:aperyfaces} and~\ref{sec:edpairs}, respectively, after an overview of the background in Section~\ref{sec:background}.

\section{Background}
\label{sec:background}

In this section, we recall Kunz nilsemigroups and presentations (for a thorough introduction, see~\cite{minprescard}), basic definitions from polyhedral geometry (see~\cite{ziegler}), and the Kunz cone (see~\cite{kunzfaces1}).  


Fix a commutative semigroup $(N,+)$.  An element $\infty \in N$ is \emph{nil} if $a+\infty=\infty$ for all $a\in N$. We say an element $a\in N$ is \emph{nilpotent} if $na=\infty$ for some $n\in\ZZ_{\geq 1}$, and \emph{partly cancellative} if $a+b=a+c\neq \infty$ implies $b=c$ for all $b,c\in N.$ We say $N$ is a \emph{nilsemigroup with identity} (or just a \emph{nilsemigroup}) if $N$ has an identity element and every non-identity element is nilpotent, and that $N$ is \emph{partly cancellative} if every non-nil element of $N$ is partly cancellative.  The \emph{atoms} of a partly cancellative semigroup are the elements that cannot be written as a sum of two non-identity elements.  

Fix a partly cancellative semigroup $S$ with finite generating set $n_0, \ldots, n_k$ (we write $S = \langle n_0,\ldots,n_k\rangle$ in this case).  A \emph{factorization} of an element $n \in S$ is an expression
\[
n=z_0n_0+\cdots+z_kn_k
\]
of $n$ as a sum of generators of $S$. The \emph{set of factorizations} of $n \in S$ is the set
\[
\mathsf{Z}_S(n)=\{z\in \ZZ_{\ge 0}^{k+1}:n=z_0n_0+\cdots+z_kn_k\},
\]
viewed as a subset of $\ZZ_{\ge 0}^{k+1}$. 
The \emph{support} of a factorization $z = (z_0,\ldots,z_k)$ is the set $\supp(z) = \{i : z_i > 0\}$ of nonzero coordinates.  
The \emph{factorization homomorphism} of $S$ is the function $\varphi_S:\ZZ_{\ge 0}^{k+1}\to S$ given by 
\[
\varphi_S(z_0,\ldots,z_k)=z_0n_0+\cdots+z_kn_k
\] 
sending each tuple to the element of $S$ it is a factorization of. 
The \emph{kernel} of $\varphi_S$ is 
\[
\ker\varphi_S = \{(z,z')\in \ZZ_{\ge 0}^{k+1} \times \ZZ_{\ge 0}^{k+1}: \varphi_S(z) = \varphi_S(z')\},
\]
which is an equivalence relation $\til$ setting $z \sim z'$ whenever $\varphi_S(z)$ = $\varphi_S(z')$. In fact, $\til$ is a \emph{congruence relation}, meaning that $z \sim z'$ implies $(z + z'') \sim (z' + z'')$ for all $z, z', z'' \in \ZZ_{\ge 0}^{k+1}$ (i.e., $\til$ is closed under \emph{translation}).  Each such relation $z \sim z'$ is called a \emph{trade} of $S$.  A set $\rho$ of trades is said to \emph{generate} $\til$ if $\til$ is the smallest congruence on $\ZZ_{\ge 0}^{k+1}$ containing $\rho$.  A \emph{presentation} of $S$ is a set of trades obtained from a generating set of $\ker\varphi_S$ by omitting any generators $z \sim z'$ for which $\varphi_S(z)$ is nil.

Fix a subset $Z \subseteq \ZZ_{\ge 0}^d$.  The \emph{factorization graph} of $Z$ is the graph $\nabla_Z$ whose vertices are the elements of $Z$ in which two tuples $z, z' \in Z$ are connected by an edge if $z_i > 0$ and $z_i' > 0$ for some $i$.  If $S$ is a finitely generated semigroup and $n \in S$, then we write $\nabla_n = \nabla_Z$ for $Z = \mathsf Z_S(n)$.  The following, which appeared as \cite[Proposition~3.8]{kunzfaces3}, identifies a relationship between factorization graphs and presentations.  

\begin{prop}\label{p:kunzminpresnabla}
Fix a finitely generated, partly cancellative semigroup $S$.  A set $\rho$ of relations between factorizations of elements of $S \setminus \{\infty\}$ is a presentation of $S$ if and only if for every non-nil $a \in S$, a connected graph is obtained from $\nabla_a$ by adding an edge for each pair of factorizations of $a$ in $\rho$.
\end{prop}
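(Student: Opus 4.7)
The plan is to prove both directions via an explicit dictionary between one-step derivations in the congruence generated by $\rho$ and single edges in the augmented factorization graph; under this dictionary, derivations correspond to walks and ``generates the non-nil part of $\ker\varphi_S$'' corresponds to ``connectivity for every non-nil~$a$''.

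For the forward direction, assume $\rho$ is a presentation. Fix a non-nil $a \in S$ and $z, z' \in \mathsf Z_S(a)$. Because $\rho$ generates the non-nil part of $\ker\varphi_S$, there is a finite chain $z = z_0, z_1, \ldots, z_n = z'$ in which each consecutive pair takes the form $z_i = w + u$, $z_{i+1} = w + u'$ for some $(u,u') \in \rho$ (up to swapping) and some $w \in \ZZ_{\ge 0}^{k+1}$. Applying a trade preserves $\varphi_S$, so every $z_i$ lies in $\mathsf Z_S(a)$. If $w = 0$ then $(z_i, z_{i+1})$ itself belongs to $\rho$ and contributes an edge in the augmented graph; if $w \neq 0$ then any $j \in \supp(w)$ witnesses $j \in \supp(z_i) \cap \supp(z_{i+1})$, contributing an edge in $\nabla_a$. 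Either way the chain realizes a walk from $z$ to $z'$.

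For the reverse direction, assume the augmented graph is connected for every non-nil $a$, and let $\approx_\rho$ denote the smallest congruence on $\ZZ_{\ge 0}^{k+1}$ containing $\rho$. I need $z \approx_\rho z'$ whenever $\varphi_S(z) = \varphi_S(z') = a$ is non-nil, and I induct on
\[
\ell(a) \;=\; \max\{z_0 + \cdots + z_k : (z_0,\ldots,z_k) \in \mathsf Z_S(a)\},
\]
the length of a longest factorization of $a$. First one must check $\ell(a) < \infty$: arbitrarily long factorizations would, by pigeonhole, force some generator $n_j$ to appear arbitrarily often, yielding $a = c + N n_j$ for arbitrarily large $N$; nilpotency of $n_j$ then gives $N n_j = \infty$ for some $N$, making $a = \infty$, a contradiction. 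With $\ell(a)$ finite, the inductive step connects $z$ and $z'$ along a path $z_0, \ldots, z_n$ in the augmented graph on $\mathsf Z_S(a)$ and handles each edge. A $\rho$-edge yields $z_i \approx_\rho z_{i+1}$ directly. A $\nabla_a$-edge provides an index $j$ with $z_i = e_j + y_i$ and $z_{i+1} = e_j + y_{i+1}$; here $\varphi_S(y_i) = \varphi_S(y_{i+1}) = a - n_j$ is itself non-nil (otherwise $a = n_j + \infty = \infty$), and since extending any factorization of $a - n_j$ by $e_j$ strictly increases its length, $\ell(a - n_j) < \ell(a)$, so the inductive hypothesis gives $y_i \approx_\rho y_{i+1}$ and translation by $e_j$ then gives $z_i \approx_\rho z_{i+1}$. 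Concatenating yields $z \approx_\rho z'$.

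The principal subtlety I anticipate is confirming $\ell(a) < \infty$ together with the accompanying check that $a - n_j$ remains non-nil in the inductive step; these are the only places where the absorbing behavior of $\infty$ and the nilpotency of generators are genuinely invoked. Everything else is a mechanical two-way translation between congruence derivations and graph walks, with the $w = 0$ versus $w \neq 0$ dichotomy in the forward direction precisely mirroring the $\rho$-edge versus $\nabla_a$-edge dichotomy in the reverse.
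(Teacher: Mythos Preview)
The paper does not prove this proposition; it is quoted from \cite{kunzfaces3}, so there is no in-paper argument to compare against. Your forward direction is essentially correct, though you should make explicit why the chain uses only trades from $\rho$: the generating set of $\ker\varphi_S$ from which $\rho$ is extracted may also contain nil trades, but any step using a nil trade $(u,u')$ would force $\varphi_S(z_i)=\varphi_S(w)+\varphi_S(u)=\infty$, contradicting the non-nilness of $a$.

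Your reverse direction has a real gap. You justify $\ell(a)<\infty$ by invoking ``nilpotency of $n_j$'', but the statement assumes only that $S$ is partly cancellative, not that it is a nilsemigroup, and in that stated generality the reverse implication is actually false. Take $S=\ZZ_2\cup\{\infty\}$ (the group $\ZZ_2$ with an absorbing element adjoined), generated by $n_0=1$ and $n_1=\infty$, and set $\rho=\{((0,0),(4,0))\}$. Every augmented graph is connected: the $\rho$-edge ties $(0,0)$ to $(4,0)$, which is already joined to $(2,0),(6,0),\ldots$ in $\nabla_0$, and $\nabla_1$ is a clique on $\{(2k+1,0):k\ge 0\}$. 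Yet the congruence generated by $\rho$ together with all nil trades identifies first coordinates only modulo $4$ on the $b=0$ slice, so it does not contain $((0,0),(2,0))\in\ker\varphi_S$, and $\rho$ is not a presentation. Your induction is, however, correct once one assumes $S$ is a nilsemigroup with nonidentity generators---the only case the present paper ever uses---so the issue is really that the proposition as transcribed here is slightly over-general rather than that your strategy is wrong.
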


A \emph{numerical semigroup} is an additive subsemigroup $S$ of $\ZZ_{\ge 0}$ with finite complement that contains $0$.  Any numerical semigroup has a unique generating set that is minimal with respect to inclusion.  The cardinality of the minimal generating set of $S$ is known as the \emph{embedding dimension} of~$S$, and the smallest positive element of $S$ is called the \emph{multiplicity} of $S$.

Letting $m$ be the multiplicity of $S$, the \emph{Ap\'ery set} of $S$ is the set
\[
\Ap(S)=\{n\in S\,:\, n-m\not\in S\}
\]
of minimal elements of $S$ within each equivalence class modulo $m$. Since $S$ is cofinite, we have that $|\Ap(S)| = m$ and that $\Ap(S)$ contains exactly one element in each equivalence class modulo $m$. If we write 
\[
\Ap(S)=\{0,a_1,\ldots,a_{m-1}\},
\]
where $a_i \equiv i \bmod m$ for each $i=1,\ldots,m-1$, then we refer to the tuple $(a_1,\ldots,a_{m-1})$ as the \emph{Ap\'ery tuple} of $S$.

The \emph{Kunz nilsemigroup} $N$ of a numerical semigroup $S$ is obtained from $S/\sim$, where $\sim$ is the congruence that relates $a \sim b$ whenever $a = b$ or $a,b \notin \Ap(S)$ (the set $S\setminus \Ap(S)$ comprises the nil of $S/\sim$), by replacing each non-nil element with its equivalence class in $\ZZ_m$.  The atoms of $N$ are thus the elements $\ol a \in \ZZ_m$ where $a \ne m$ is a minimal generator of $S$.  

\begin{example}\label{e:kunznilsemigroup}
Consider the numerical semigroup $S = \langle 6,7,8,9\rangle$. Its Ap\'ery set, written as $\{0,a_1,\ldots, a_5\}$ with $a_i \equiv i \bmod 6$, is 
\[
\Ap(S) = \{0, 7, 8, 9, 16, 17\}.
\]
Thus the Kunz nilsemigroup $N$ of $S$ is the set $\{0,1,2,3,4,5,\infty\}$ equipped with the operation $+$ defined so that $0$ is the identity, 
\begin{center}
1 + 3 = 4,
\qquad
2 + 2 = 4,
\qquad
2 + 3 = 5,
\end{center}
and all other sums yield $\infty$.  The divisibility poset of $N \setminus \{\infty\}$, which places $b$ above $a$ whenever $a + c = b$ for some $c \in N$, is depicted in Figure~\ref{f:nilsemigroup-example}. The factorization sets of the non-nil elements in $N$ are 
\begin{alignat*}{3}
\mathsf{Z}_N(0)& = \{(0,0,0)\},\quad \mathsf{Z}_N(1) = \{(1,0,0)\}, \quad &&\mathsf{Z}_N(2) = \{(0,1,0)\}, \\
\mathsf{Z}_N(3)& = \{(0,0,1)\}, \quad \mathsf{Z}_N(4) = \{(1,0,1),(0,2,0)\}, \quad &&\mathsf{Z}_N(5) = \{(0,1,1)\}.
\end{alignat*}
Looking at the factorization graphs of the non-nil elements of $N$, we see that all are connected except $\nabla_4$.  Only one edge is needed to connect it, namely the edge corresponding to the trade $(1,0,1) \sim (0,2,0)$.  Therefore, $\rho = \{((1,0,1),(0,2,0))\}$ is a presentation for $N$ by Proposition~\ref{p:kunzminpresnabla}.
\end{example}

\begin{figure}[t!]
\centering
\includegraphics[scale=0.4]{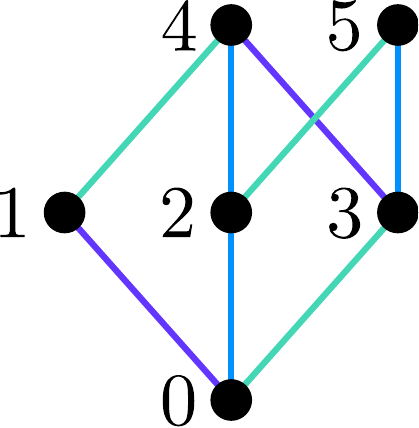}
\caption{The divisibility poset of the non-nil elements of the Kunz nilsemigroup of $S = \langle 6, 7, 8, 9\rangle$.}
\label{f:nilsemigroup-example}
\end{figure}

A \emph{rational polyhedral cone} $C \subseteq \RR^d$ is the set of solutions to a finite list of linear inequalities with rational coefficients, that is,
\[
P=\{ x\in \RR^d:Ax \ge 0 \}
\] 
for some rational matrix $A$.  We say $C$ is \emph{pointed} if it does not contain any positive-dimensional linear subspaces of $\RR^d$, and the \emph{dimension} of $C$ is the vector space dimension of $\spann_\RR C$.  
If none of the inequalities can be omitted without altering $C$, we call this list an \emph{$H$-description} or \emph{facet description} of $C$ (if $C$ is full-dimensional, then such a list of inequalities is unique up to reordering and scaling by positive constants).  The inequalities appearing in the $H$-description of $C$ are called \emph{facet inequalities} of $C$.

Given a facet inequality $a_1x_1+\cdots a_dx_d \ge 0$ of $C$, the intersection of $C$ with the equation $a_1x_1+\cdots a_dx_d = 0$ is called a \emph{facet} of $C$.  Any intersection $F$ of facets of $C$ is called a \emph{face} of $C$, and it itself a rational polyhedral cone.  The set of facets containing a face $F$ is called the \emph{facet description} of $F$.  The \emph{interior} of $F$, denoted $F^\circ$, is the set of points in $F$ that do not lie in any faces properly contained in $F$.  

Fix $m \ge 2$. The \emph{Kunz cone} $\Cm \subseteq \RR_{\geq 0}^{m-1}$ is the pointed cone with facet inequalities
\[
x_i+x_j\geq x_{i+j}
\qquad
\text{for}
\qquad
i,j \in \ZZ_m\setminus \{0\}
\qquad
\text{with}
\qquad
i+j\not = 0,
\] 
where the coordinates of $\RR^{m-1}$ are indexed by $\ZZ_m \setminus \{0\}.$ 
The integer points $(z_1,\ldots,z_{m-1})$ satisfying $z_i \equiv i \bmod m$ for each $i$ are called \emph{Ap\'ery points}.  

\begin{thm}[{\cite{kunz}}]\label{t:kunzcoords}
For each $m \ge 2$, the set of Ap\'ery points in $\Cm$ coincides with the set of Ap\'ery tuples of numerical semigroups containing $m$.
\end{thm}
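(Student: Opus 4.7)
The plan is to verify the two inclusions separately. The forward direction is essentially a translation of definitions, while the reverse direction is where the facet inequalities of $\Cm$ actually do real work.

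For the forward direction, I would fix a numerical semigroup $S$ containing $m$ and write its Ap\'ery set as $\{0, a_1, \ldots, a_{m-1}\}$ with $a_i \equiv i \pmod m$. The congruence condition defining Ap\'ery points is immediate, and each $a_i$ is a non-negative integer. For a facet inequality $x_i + x_j \geq x_{i+j}$ with $i, j, i+j$ all nonzero in $\ZZ_m$, I would note that $a_i + a_j$ is an element of $S$ lying in the residue class of $i+j$ modulo $m$; since $a_{i+j}$ is by definition the \emph{minimum} element of $S$ in that class, we get $a_i + a_j \geq a_{i+j}$.

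For the reverse direction, given an Ap\'ery point $(a_1, \ldots, a_{m-1}) \in \Cm$, I would define
\[
S = m\NN \cup \bigcup_{i=1}^{m-1} (a_i + m\NN)
\]
and verify it is a numerical semigroup containing $m$ whose Ap\'ery tuple is $(a_1,\ldots,a_{m-1})$. Containment of $0$ and $m$, and cofiniteness (since every residue class modulo $m$ is eventually covered), are transparent. The crucial step, which I expect to be the main obstacle, is closure under addition. Given $x$ and $y$ in $S$ with respective residues $i, j \in \ZZ_m$, the case when one of $i,j$ is zero is immediate from the construction. When both are nonzero, I would split on whether $i + j \equiv 0 \pmod m$: if so, then $a_i + a_j$ is a positive integer divisible by $m$, placing $x + y \in m\NN$; if not, the facet inequality $a_i + a_j \geq a_{i+j}$ (with index read mod $m$) together with $a_i + a_j \equiv a_{i+j} \pmod m$ yields $a_i + a_j = a_{i+j} + km$ for some $k \in \NN$, so $x + y \in a_{i+j} + m\NN \subseteq S$.

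Finally, since $S \cap (i + m\ZZ) = a_i + m\NN$ for each nonzero $i \in \ZZ_m$, the minimum element of $S$ in each nonzero residue class is precisely $a_i$, yielding $\Ap(S) = \{0, a_1, \ldots, a_{m-1}\}$ as desired. I don't anticipate further obstacles; once the role of the facet inequalities is pinned down in the closure step, everything else is bookkeeping.
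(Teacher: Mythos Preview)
Your proof is correct and complete. Note, however, that the paper does not actually supply its own proof of this theorem: it is stated with a citation to Kunz's original work and left unproved in the text. So there is no ``paper's proof'' to compare against here. What you have written is precisely the standard argument for this classical result, and it goes through without issue. The only minor point worth mentioning is that the paper phrases the theorem for numerical semigroups \emph{containing} $m$ (not necessarily with multiplicity $m$), so the Ap\'ery set should be understood as taken with respect to $m$ rather than with respect to the multiplicity; your construction handles this correctly, since you verify $S \cap (i + m\ZZ) = a_i + m\NN$ directly.
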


We say a face $F \subseteq \Cm$ is \emph{degenerate} if there exists a point $x \in F^\circ$ with some $x_i = 0$.  Positive dimensional degenerate faces occur in $\Cm$ whenever $m$ is composite (see \cite[Example~3.6]{kunzfaces1}).  However, such faces never contain Ap\'ery points.  The following result, which appeared as \cite[Theorem~3.3]{kunzfaces3}, implies that the Kunz nilsemigroup of a numerical semigroup $S$ uniquely identifies the face of $\Cm$ containing its Ap\'ery tuple.  

\begin{thm}\label{t:kunznilsemigroup}
Fix a non-degenerate face $F \subseteq \Cm$. Define a commutative operation $\oplus$ on the set $\ZZ_m \cup \{\infty\}$ so that $\infty$ is nil and for all $a,b \in \ZZ_m$,
\[
a \oplus b = 
\begin{cases}
a+b & \text{if } x_a + x_b = x_{a+b}; \\
\infty & \text{otherwise.}
\end{cases}
\]
\begin{enumerate}[(a)]
\item 
Under this operation, $(N,\oplus)$ is a partly cancellative nilsemigroup (which we call the \emph{Kunz nilsemigroup of $F$}).

\item 
When $F$ contains a numerical semigroup $S$, the Kunz nilsemigroup of $S$ equals the Kunz nilsemigroup of $F$.

\end{enumerate}
\end{thm}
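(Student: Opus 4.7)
The plan is to verify each axiom of a partly cancellative nilsemigroup directly from the definition of $\oplus$, using the Kunz cone inequalities and the non-degeneracy hypothesis on $F$; part (b) will then follow from the uniqueness of the facet description of $F$. Commutativity is immediate from the symmetric definition. Setting $x_0 := 0$ by convention, the element $0 \in \ZZ_m$ is an identity because $x_a + x_0 = x_a$ trivially. The partly cancellative property is also immediate: $a \oplus b = a \oplus c \ne \infty$ forces $a + b \equiv a + c \pmod{m}$, hence $b = c$. Well-definedness of $\oplus$ on $F$ (its independence from the choice of $x \in F^\circ$) follows from the standard fact that the facet inequalities tight at any point of $F^\circ$ are exactly those cutting out $F$.

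The main technical step is associativity, $(a \oplus b) \oplus c = a \oplus (b \oplus c)$. I would first treat the case where the left-hand side is non-nil: unwinding the definition twice yields $x_a + x_b = x_{a+b}$, $x_{a+b} + x_c = x_{a+b+c}$, and $a + b + c \not\equiv 0 \pmod{m}$, hence $x_a + x_b + x_c = x_{a+b+c}$. Non-degeneracy of $F$ rules out $b + c \equiv 0 \pmod{m}$, since otherwise the identity above would reduce to $x_b + x_c = 0$, contradicting $x_b, x_c > 0$ on $F^\circ$. Combining this observation with the Kunz cone inequalities $x_a + x_{b+c} \ge x_{a+b+c}$ and $x_b + x_c \ge x_{b+c}$ produces the pinching
\[
x_{a+b+c} \le x_a + x_{b+c} \le x_a + x_b + x_c = x_{a+b+c},
\]
which forces equality throughout. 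Hence $b \oplus c = b + c$ and $a \oplus (b+c) = a + b + c$, and associativity follows. If the left-hand side is nil, commutativity reduces matters to the symmetric case applied after swapping $a$ and $c$. Nilpotency of any nonzero $a \in \ZZ_m$ is handled in the same spirit: iterating $a$ yields a first $k$ with $ka \equiv 0 \pmod{m}$, at which point computing $(k-1)a \oplus a$ would require $x_{(k-1)a} + x_a = x_0 = 0$, contradicting non-degeneracy, so $ka = \infty$.

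For (b), assume the Ap\'ery tuple $(a_1, \dots, a_{m-1})$ of $S$ lies in $F^\circ$ (again setting $a_0 := 0$). Since the facet equations $x_i + x_j = x_{i+j}$ tight on $F$ coincide with those tight at any point of $F^\circ$, the equality $x_i + x_j = x_{i+j}$ holds if and only if $a_i + a_j = a_{i+j}$, which in turn is equivalent to $a_i + a_j \in \Ap(S)$, the condition defining the Kunz nilsemigroup of $S$ from Section~\ref{sec:background}. Direct comparison of definitions then shows the two operations agree, completing (b). The principal obstacle in this plan is the associativity step; once one identifies the pinching from the cone inequalities and invokes non-degeneracy to exclude the wraparound case $b + c \equiv 0$, everything else reduces to bookkeeping.
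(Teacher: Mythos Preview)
The paper does not prove this theorem; it is quoted as background material from \cite[Theorem~3.3]{kunzfaces3} and stated without proof. There is therefore nothing in the present paper to compare your argument against.

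That said, your proof is correct. The associativity step via the pinching
\[
x_{a+b+c} \le x_a + x_{b+c} \le x_a + x_b + x_c = x_{a+b+c}
\]
is exactly the right idea, and your use of non-degeneracy to exclude the wraparound $b+c \equiv 0$ is the only subtle point. Two minor remarks. First, in the nilpotency argument you should make explicit the case split: either the $\oplus$-iterate $\underbrace{a \oplus \cdots \oplus a}_{j}$ already equals $\infty$ for some $j < k$ (done), or else it equals $ja \in \ZZ_m$ for every $j < k$, and then your contradiction at $j = k$ applies. As written, the sentence ``iterating $a$ yields a first $k$ with $ka \equiv 0 \pmod m$'' conflates the $\ZZ_m$-product $ka$ with the $\oplus$-iterate. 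Second, in part~(b) you implicitly use that for $i + j \not\equiv 0$, the condition $a_i + a_j \in \Ap(S)$ is equivalent to $a_i + a_j = a_{i+j}$; this is immediate from the definition of the Ap\'ery set but worth stating, since that is the precise hinge between the two nilsemigroup constructions.
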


In view of Theorems~\ref{t:kunzcoords} and~\ref{t:kunznilsemigroup}, given a numerical semigroup $S$ and a face $F \subseteq \Cm$, we write $S \in F$ if the Ap\'ery tuple of $S$ lies in $F^\circ$.  

\begin{remark}\label{r:degenerate}
The degenerate faces are precisely those in the image of the injection given by \cite[Corollary~3.7]{kunzfaces1}, hence it is sufficient to classify the embedding and face dimension of non-degenerate faces.  For these reasons, from a classification standpoint, it suffices to consider non-degenerate faces of $\Cm$.  
\end{remark}

\begin{example}\label{e:kunzcoords}
Let us return to the numerical semigroup $S = \langle 6,7,8,9 \rangle$ from Example~\ref{e:kunznilsemigroup}.  Since the Kunz nilsemigroup of $S$ is equal to the Kunz nilsemigroup of the face $F \subseteq \mathcal C_6$ it lies on, the facet equalities of $F$ are 
\begin{align*}
x_1 + x_3 = x_4,
\qquad
x_2 + x_2 = x_4,
\qquad
\text{and}
\qquad
x_2 + x_3 = x_5.
\end{align*}
Furthermore, the Ap\'ery tuple of $S$, $(7,8,9,16,17)$, satisfies the above facet equalities, indicating that these are in fact the $3$ facets of $\mathcal{C}(\ZZ_6)$ containing $(7,8,9,16,17)$.  
\end{example}

Fix a non-degenerate face $F \subseteq \Cm$ with Kunz nilsemigroup $N$ and contained in $i$ facets.  The \emph{embedding dimension} of $F$ is the number $\mathsf e(F) = k$ of atoms of $N$.  If $F$ contains a numerical semigroup~$S$, then $\mathsf e(S) = \mathsf e(F) + 1$.  
The \emph{hyperplane matrix} of $F$ is the matrix $H_F\in\ZZ^{i\times (m-1)}$ whose columns are indexed by the nonzero elements of $\ZZ_m$ and whose rows encode the equations of the facets containing $F$.  Note $\dim F = \rk H_F$.  
Given any finite presentation $\rho$ of $N$, the matrix $M_\rho\in \ZZ^{|\rho|\times k}$ whose columns are indexed by the atoms of $N$ and whose rows have the form $z-z'$ for $(z,z') \in \rho$ is called a \emph{presentation matrix} of $N$.
The following appeared as \cite[Theorem~4.3]{kunzfaces3} and expresses $\dim F$ in terms of $\rk M_\rho$.  

\begin{thm}\label{t:dimension}
If $F \subseteq \Cm$ is a non-degenerate face with Kunz nilsemigroup $N$, then 
\[
\dim F = \mathsf e(F) - \rk(M_\rho)
\]
where $\rho$ is any finite presentation of $N$. 
\end{thm}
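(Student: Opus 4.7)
The plan is to construct a linear isomorphism $\pi : \spann F \to \ker M_\rho$, from which
\[
\dim F = \dim \spann F = k - \rk M_\rho = \mathsf e(F) - \rk M_\rho
\]
follows at once. Let $a_1,\ldots,a_k$ denote the atoms of $N$ and define $\pi : \spann F \to \RR^k$ by $\pi(x) = (x_{a_1},\ldots,x_{a_k})$, the projection to the atom coordinates. The subspace $\spann F$ is cut out by the facet equalities $x_a + x_b = x_{a+b}$ for $a \oplus b \ne \infty$ in $N$, and iterating these along any factorization $z \in \mathsf Z_N(a)$ yields $x_a = z \cdot \pi(x)$. Every non-nil $a \in \ZZ_m$ admits at least one such factorization, since any non-atom decomposes as a sum of two strictly smaller non-nil elements (induction on the finite, well-founded divisibility poset of $N$). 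Thus $\pi$ is injective. Moreover, for each trade $(z,z') \in \rho$, writing $a = \varphi_N(z) = \varphi_N(z')$, one has $(z - z') \cdot \pi(x) = x_a - x_a = 0$, whence $\pi(\spann F) \subseteq \ker M_\rho$.

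The main obstacle is surjectivity. Given $y \in \ker M_\rho$, I aim to build $x \in \spann F$ with $\pi(x) = y$ by setting $x_a := z \cdot y$ for any $z \in \mathsf Z_N(a)$; the crux is showing that this value is independent of the chosen $z$. I argue by strong induction on the divisibility poset of $N$, with the atom case immediate. For a non-atom $a$, Proposition~\ref{p:kunzminpresnabla} ensures that $\nabla_a$, augmented with the edges induced by the trades in $\rho$, is connected on $\mathsf Z_N(a)$, so any two $z, z' \in \mathsf Z_N(a)$ are joined by a path of such edges. Along a $\rho$-edge, $z \cdot y = z' \cdot y$ is exactly the relation $M_\rho y = 0$. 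Along a $\nabla_a$-edge, $z$ and $z'$ share some nonzero coordinate $j$, so writing $z = e_j + w$ and $z' = e_j + w'$ places $w, w' \in \mathsf Z_N(a - a_j)$; since $a - a_j$ lies strictly below $a$ in the divisibility poset (partly cancellative semigroups preclude $a + a_j = a$ with $a_j \ne 0$), the inductive hypothesis gives $w \cdot y = w' \cdot y$, and hence $z \cdot y = z' \cdot y$.

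With $x_a$ thus well-defined, the facet equalities hold for free: for $a \oplus b \ne \infty$, any $z_a \in \mathsf Z_N(a)$ and $z_b \in \mathsf Z_N(b)$ give $z_a + z_b \in \mathsf Z_N(a+b)$, so $x_{a+b} = (z_a + z_b) \cdot y = x_a + x_b$. This produces the desired $x \in \spann F$ with $\pi(x) = y$, completing the isomorphism and the proof. Note that independence of the presentation follows as a byproduct, since the identity $\rk M_\rho = k - \dim F$ holds for every finite presentation.
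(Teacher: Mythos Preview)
Your proof is correct. Note, however, that the paper does not itself prove Theorem~\ref{t:dimension}; it is quoted from \cite[Theorem~4.3]{kunzfaces3} as a background result. The argument in that reference, as reflected in Theorem~\ref{t:kunz3} of the present paper (stated here as a consequence of that proof), proceeds by performing invertible row and column operations on the hyperplane matrix $H_F$ to reach the block form with $M_\rho$ in the upper-left and an identity block in the lower-right, from which $\rk H_F = \rk M_\rho + (m-1-k)$ and hence $\dim F = (m-1) - \rk H_F = k - \rk M_\rho$. Your argument instead constructs an explicit linear isomorphism $\spann F \cong \ker M_\rho$ by projecting onto atom coordinates and invoking Proposition~\ref{p:kunzminpresnabla} to verify well-definedness of the inverse. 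The two routes are dual: one row-reduces the constraint matrix, the other directly parametrizes its kernel. Yours is arguably the more transparent account of why the presentation governs the dimension, but it does not produce the finer block decomposition of $H_F$ recorded in Theorem~\ref{t:kunz3}, which the present paper needs later in the proof of Theorem~\ref{t:main}.
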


\section{A criterion for Ap\'ery faces}
\label{sec:aperyfaces}

Not all non-degenerate faces of the Kunz cone $\Cm$ contain Ap\'ery points.  In this section, we provide a criterion for a face $F \subset \Cm$ to contain Ap\'ery points (Theorem~\ref{t:main}). We also discuss some consequences, including an algorithm to determine whether a face contains Ap\'ery points.

\begin{defn}\label{d:aperyface}
Fix $m \ge 2$ and a face $F \subseteq \Cm$ with Kunz nilsemigroup $N$.  We say $F$~is \emph{Ap\'ery}, and that $N$ is \emph{Ap\'ery}, if $F^\circ$ contains an Ap\'ery point.  
\end{defn}

\begin{example}\label{e:nonaperyex}
Consider the face $F, F' \subseteq \mathcal{C}_6$ whose Kunz nilsemigroups $N$ and $N'$ have divisibility posets depicted in Figures~\ref{f:nonaperyex1} and~\ref{f:nonaperyex2}, respectively.  If $N$ were the Kunz nilsemigroup of a numerical semigroup $S$ with Ap\'ery set $\{0,a_1,\ldots,a_5\}$, then $2a_1 = a_2 = 2a_4$, which is impossible since $a_1$ and $a_4$ are distinct modulo $6$.  Similarly, if $N'$ were the Kunz nilsemigroup of~$S$, then since $a_2=2a_1=a_3+a_5$ we either have $a_3<a_1<a_5$ or $a_5<a_1<a_3$.  Either way, it is impossible to have $a_4=2a_5=a_1+a_3$. Hence, neither $F$ nor $F'$ are Ap\'ery.
\end{example}

\begin{figure}[t!]
\begin{center}
\begin{subfigure}[t]{0.20\textwidth}
\begin{center}
\includegraphics[scale=0.35]{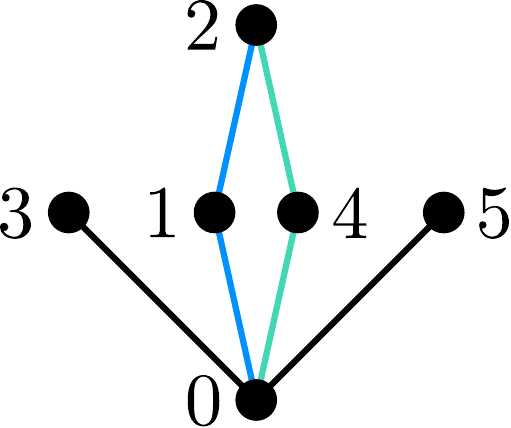}
\end{center}
\caption{}
\label{f:nonaperyex1}
\end{subfigure}
\hspace{0.02\textwidth}
\begin{subfigure}[t]{0.20\textwidth}
\begin{center}
\includegraphics[scale=0.35]{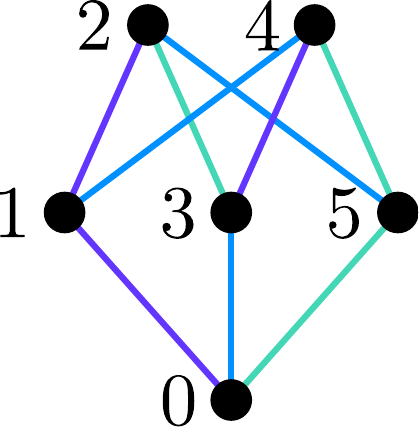}
\end{center}
\caption{}
\label{f:nonaperyex2}
\end{subfigure}
\hspace{0.02\textwidth}
\begin{subfigure}[t]{0.20\textwidth}
\begin{center}
\includegraphics[scale=0.35]{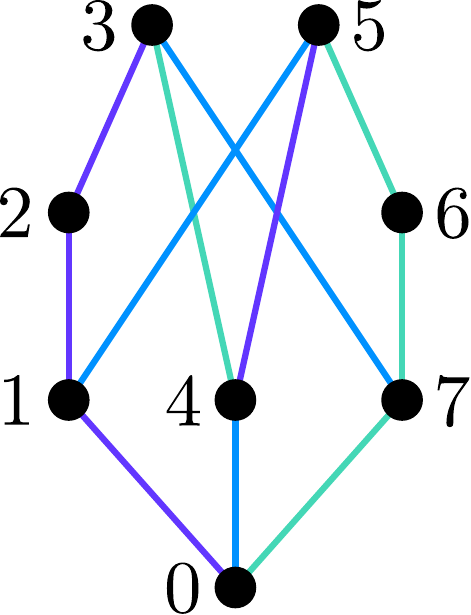}
\end{center}
\caption{}
\label{f:nonaperyex3}
\end{subfigure}
\hspace{0.02\textwidth}
\begin{subfigure}[t]{0.26\textwidth}
\begin{center}
\includegraphics[scale=0.35]{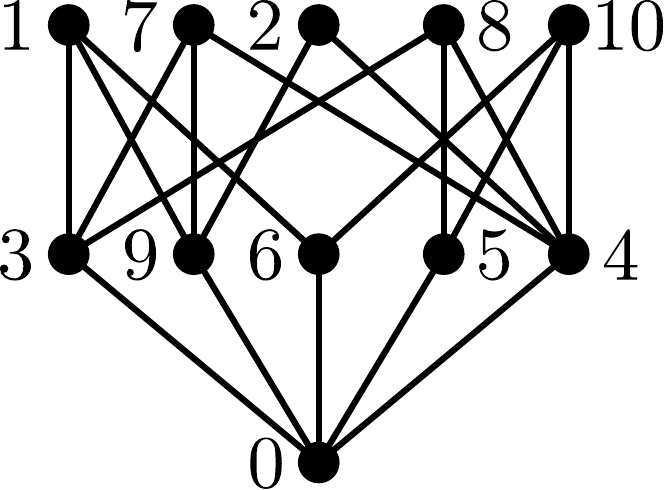}
\end{center}
\caption{}
\label{f:nonaperyex4}
\end{subfigure}
\end{center}
\caption{Divisibility posets of non-Ap\'ery Kunz nilsemigroups.}
\label{fig-examples}
\end{figure}


We briefly recall some definitions (see~\cite[Chapter~7]{cca}).  A \emph{lattice} is a $\ZZ$-submodule $L \subseteq \ZZ^n$, and the \emph{rank} of $L$ is $\rk(L) = \dim_\QQ \spann_\QQ L$.  The \emph{saturation} of $L$ is 
\[
\Sat(L) \coloneq \spann_\QQ(L) \cap \ZZ^n,
\]
and we say $L$ is \emph{saturated} if $L = \Sat(L)$.  The \emph{dual} or \emph{orthogonal complement} of $L$ is 
\[
L^\perp = \{v \in \ZZ^n : v \cdot v' = 0 \text{ for all } v' \in L\},
\]
which is a saturated lattice, even if $L$ is not saturated, and in particular $L^{\perp\perp} = \Sat(L)$.  

\begin{defn}\label{d:tradelattice}
Fix a Kunz nilsemigroup $N$ and a presentation $\rho$ of $N$. The \emph{presentation lattice} of $N$ is given by
$$L_N = L_\rho = \{ c_1 (z_1 - z_1') + \cdots + c_n (z_n - z_n') : c_1, \ldots, c_n \in \ZZ, (z_1,z_1'), \ldots, (z_n, z_n') \in \rho\}.$$
\end{defn}

The following result ensures the presentation lattice $L_\rho$ does not depend on the chosen presentation $\rho$ of $N$, justifying the alternative notation $L_N$.  

\begin{prop}\label{p:thelattice}
If $\rho, \rho'$ are presentations of a Kunz nilsemigroup $N$, then $L_\rho=L_{\rho'}$.
\end{prop}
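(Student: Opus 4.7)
The plan is to show that both $L_\rho$ and $L_{\rho'}$ coincide with the intrinsic lattice
\[
L := \bigl\langle z - z' : z, z' \in \mathsf{Z}_N(a) \text{ for some non-nil } a \in N \bigr\rangle_{\ZZ},
\]
which manifestly does not depend on any choice of presentation. The inclusion $L_\rho \subseteq L$ is immediate, since every trade $(z, z') \in \rho$ has $\varphi_N(z) = \varphi_N(z') \in N \setminus \{\infty\}$ by the definition of a presentation. The content of the proposition is thus the reverse inclusion $L \subseteq L_\rho$.

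I would establish $L \subseteq L_\rho$ by proving the following claim by strong induction on the height of $a$ in the divisibility poset of $N \setminus \{\infty\}$: for every non-nil $a \in N$ and every pair $z, z' \in \mathsf{Z}_N(a)$, the difference $z - z'$ lies in $L_\rho$. The base case $a = 0$ is trivial since $\mathsf{Z}_N(0) = \{0\}$. For the inductive step, Proposition~\ref{p:kunzminpresnabla} guarantees that $\nabla_a$, augmented by the $\rho$-edges connecting factorizations of $a$, is connected, so there is a path $z = z^{(0)}, z^{(1)}, \ldots, z^{(\ell)} = z'$ in this augmented graph. By telescoping, it suffices to show $z^{(i)} - z^{(i+1)} \in L_\rho$ for each consecutive pair; edges arising from $\rho$ contribute to $L_\rho$ directly, so the remaining case is a shared-support edge.

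For such an edge, there is an index $j$ with $z^{(i)}_j, z^{(i+1)}_j > 0$; write $z^{(i)} = e_j + u$ and $z^{(i+1)} = e_j + u'$, where $e_j$ is the $j$-th standard basis vector. Since $n_j + \varphi_N(u) = a$ is non-nil, $\varphi_N(u)$ must itself be non-nil, and similarly $\varphi_N(u')$ is non-nil. Partial cancellativity of $N$, applied with the non-nil atom $n_j$, then yields $\varphi_N(u) = \varphi_N(u')$; denote this common value $b$. Clearly $b \le a$ in the divisibility poset, and the inequality is strict: were $b = a$, then $n_j + a = a$, and iterating together with the nilpotence of $n_j$ would force $a = \infty$, contradicting that $a$ is non-nil. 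Consequently the inductive hypothesis applied at $b$ yields $u - u' \in L_\rho$, whence $z^{(i)} - z^{(i+1)} = u - u' \in L_\rho$. Summing over the path gives $z - z' \in L_\rho$, completing the induction.

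The main obstacle I anticipate is choosing the correct induction parameter. A naive induction on coordinate sum $|z| + |z'|$ fails because intermediate vertices along a path in the augmented factorization graph may have strictly larger coordinate sum than the endpoints $z$ and $z'$, so word length need not descend. Inducting on divisibility-poset height succeeds precisely because each shared-support reduction passes from a factorization of $a$ to a factorization of a strict divisor $b < a$, with strictness secured by the partial cancellativity of $N$ combined with the nilpotence of its atoms.
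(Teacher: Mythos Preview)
Your proof is correct but takes a different route from the paper. The paper gives a short congruence-theoretic argument: since any two presentations generate the same congruence on factorizations of non-nil elements, each trade $(z,z') \in \rho$ is obtained from trades in $\rho'$ via symmetry, transitivity, and translation; each of these closure operations carries differences $z-z'$ to $\ZZ$-linear combinations of differences, so $z - z' \in L_{\rho'}$, and by symmetry of the argument $L_\rho = L_{\rho'}$. You instead identify both lattices with the intrinsic lattice $L$ spanned by all differences of pairs of factorizations of a common non-nil element, and prove the nontrivial inclusion $L \subseteq L_\rho$ via Proposition~\ref{p:kunzminpresnabla} together with an induction on divisibility height. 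The paper's argument is shorter and requires no auxiliary results beyond the definition of a congruence, whereas yours yields as a byproduct a presentation-free description of $L_\rho$ and makes explicit where partial cancellativity and nilpotence of atoms are used.
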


\begin{proof}
Let $(z,z') \in \rho$. Since $\rho$ and $\rho'$ generate the same congruence, $(z,z')$ can be obtained via translation, transitivity, and symmetry from trades in $\rho'$. Thus, $z-z'$ is an integral linear combination of vectors in $L_{\rho'}$, ensuring $z-z' \in L_{\rho'}$.  This means $L_\rho \subseteq L_{\rho'}$, and so by an identical argument, $L_\rho = L_{\rho'}$.
\end{proof}

We are now ready to state the main theorem of this section, though we defer its proof to later in the section.  

\begin{thm}\label{t:main}
Fix a non-degenerate face $F \subseteq \Cm$ with Kunz nilsemigroup $N$. Let $a_1, \ldots, a_k$ denote the atoms of $N$, and let $\alpha = (a_1, \ldots, a_k) \in \ZZ^e$.  Then $F$ is Ap\'ery if and only if $v \cdot \alpha \equiv 0 \bmod m$ for all $v \in \Sat(L_N)$. 
\end{thm}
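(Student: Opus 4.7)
The plan is to reduce the geometric question of finding an Apéry point in $F^\circ$ to a lattice-theoretic mod-$m$ question about $L_N$, then apply a duality argument to convert it into the form stated. As a first step, I would describe the integer points of the linear span $\spann F$. For any $x \in \spann F$ and any non-nil $i \in N$, iterating the facet equalities of $F$ provided by Theorem~\ref{t:kunznilsemigroup} yields
\[
x_i = z_1 x_{a_1} + \cdots + z_k x_{a_k}
\]
for every factorization $z = (z_1, \ldots, z_k)$ of $i$ in the atoms. Thus $\spann F$ is parameterized by an atom vector $y = (x_{a_1}, \ldots, x_{a_k}) \in L_N^\perp \otimes \RR$ together with free real values $x_i$ for each nil $i \in \ZZ_m \setminus \{0\}$, and integer points of $\spann F$ correspond bijectively to pairs $(y, (x_i)_{i \text{ nil}})$ with $y \in L_N^\perp$ and each $x_i \in \ZZ$ arbitrary. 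Under this bijection, the Apéry condition $x_i \equiv i \pmod m$ for all $i$ reduces to the single requirement $y \equiv \alpha \pmod m$: the non-atom non-nil residues are automatic from $\sum_j z_j a_j \equiv i \pmod m$, and the nil residues can be freely prescribed (e.g., by setting $x_i = i$).

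The heart of the proof is the algebraic claim that there exists $y \in L_N^\perp$ with $y \equiv \alpha \pmod m$ if and only if $v \cdot \alpha \equiv 0 \pmod m$ for all $v \in \Sat(L_N)$. The forward direction is immediate: $L_N^{\perp\perp} = \Sat(L_N)$ forces $v \cdot y = 0$ exactly for any such $v$, so $v \cdot \alpha \equiv v \cdot y = 0 \pmod m$. For the converse, I would write $y = \alpha + mz$ and seek $z \in \ZZ^k$ with $v \cdot z = -(v \cdot \alpha)/m$ for all $v \in \Sat(L_N)$. The hypothesis makes the right-hand side a well-defined $\ZZ$-linear form on $\Sat(L_N)$, and the technical crux is that this form extends to a $\ZZ$-linear form on $\ZZ^k$: since $\Sat(L_N)$ is saturated, the quotient $\ZZ^k/\Sat(L_N)$ is torsion-free, hence free, so the inclusion splits and the extension is automatic. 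Any such extension is realized via inner product with some $z \in \ZZ^k$, producing the desired $y$. Phrasing the hypothesis over $\Sat(L_N)$ rather than merely over $L_N$ is essential here, since torsion in $\Sat(L_N)/L_N$ could otherwise obstruct extending the form.

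Finally, the integer point $y_A \in \spann F$ with Apéry residues produced above must be promoted into $F^\circ$ itself. Since $F$ is non-degenerate, clearing denominators from any rational point in $F^\circ$ produces some $x^\circ \in F^\circ \cap \ZZ^{m-1}$. For $M$ a sufficiently large positive multiple of $m$, the point $y_A + M x^\circ$ preserves the residues of $y_A$ (because $M x^\circ \equiv 0 \pmod m$ coordinatewise) and lies in $F^\circ$: indeed, $\tfrac{1}{M} y_A + x^\circ \to x^\circ \in F^\circ$ as $M \to \infty$, so this perturbation lies in the open set $F^\circ$ for $M$ large, whence scaling by $M$ and using that $F^\circ$ is closed under positive scaling gives $y_A + M x^\circ \in F^\circ$. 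This produces an Apéry point in $F^\circ$, completing the proof.
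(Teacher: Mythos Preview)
Your proof is correct and takes a genuinely different, more elementary route than the paper's.

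The paper works in $\ZZ^{m-1}$ throughout: it first applies a general mod-$m$ duality theorem (Theorem~\ref{t:general}, proved via a lemma on images of saturated lattices in $\ZZ_m^n$ and results of Wilding--Johnson--Kambites on double orthogonality over $\ZZ_m$) to the lattice $\ker(H_F) \cap \ZZ^{m-1}$, obtaining the condition $w \cdot (1,2,\ldots,m-1) \equiv 0 \pmod m$ for all integer $w \in \Row(H_F)$. It then invokes a block decomposition $RH_FC = \left(\begin{smallmatrix} M_\rho & 0 \\ -A & I \end{smallmatrix}\right)$, quoted from an earlier paper in the series, to strip away the rows involving $A$ and descend from $\Row(H_F) \cap \ZZ^{m-1}$ to $\Sat(L_N) \subseteq \ZZ^k$.

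You instead parameterize $\spann F$ directly by atom coordinates from the outset, so the problem lands in $\ZZ^k$ immediately and no block-matrix reduction is needed. Your duality step is also more self-contained: rather than passing through orthogonality in $\ZZ_m^n$, you use that $\Sat(L_N)$ is a direct summand of $\ZZ^k$ to extend the $\ZZ$-linear form $v \mapsto -(v\cdot\alpha)/m$. This avoids the external citation entirely and makes the role of saturation transparent. Your promotion of the Ap\'ery point from $\spann F$ to $F^\circ$ is essentially the paper's Proposition~\ref{p:hyperplanes}, argued inline. What the paper's approach buys in exchange is a standalone lattice duality theorem and lemma that may be of independent use.

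One small remark: for a non-degenerate face the Kunz nilsemigroup has underlying set $\ZZ_m \cup \{\infty\}$ with $\infty$ the unique nil element, so there are no ``nil $i \in \ZZ_m \setminus \{0\}$.'' Your references to free coordinates at such $i$ are vacuous and can simply be deleted; the parameterization $\spann F \cong L_N^\perp \otimes \RR$ is already exact without them.
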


\begin{example}\label{e:main}
Consider face $F \subseteq \mathcal C_8$ whose Kunz nilsemigroup $N$ has divisibility poset depicted in Figure~\ref{f:nonaperyex3}.  Using the presentation 
$$\rho = \{((3,0,0),(0,1,1)),((0,0,3),(1,1,0)\}$$
of $N$, one can argue directly that $F$ is not Ap\'ery since any Ap\'ery point $(a_1, \ldots, a_7) \in F$ must satisfy $4a_1 = a_1 + a_2 + a_3 = 4a_3$, which is impossible since $a_1 \not\equiv a_3 \bmod 8$.  Theorem~\ref{t:main} ensures this same conclusion since 
$$(3,-1,-1)-(-1,-1,3)=(4,0,-4)\in L_N,$$
implies $(1,0,-1) \in \Sat(L_N)$, but $(1,0,-1) \cdot (1,4,7)=-6 \not\equiv 0 \bmod 8$.  
\end{example}

Most ``small'' examples of non-Ap\'ery faces, including all 3 thus far in this section, rely on $m$ being composite, in that $ax \equiv b \bmod m$ has 2 or more solutions when $a \ge 2$ divides $m$.  However, $\Cm$ has non-Ap\'ery faces for each $m \ge 11$.  

\begin{example}\label{e:primemult}
The face $F \subseteq \mathcal C_{11}$, whose Kunz nilsemigroup $N$ has divisibility poset depicted in Figure~\ref{f:nonaperyex4}, is not Ap\'ery.  Indeed, one presentation of $N$ is
\begin{align*}
    \rho&=\{((1,0,0,0,1),(0,2,0,0,0)),((1,1,0,0,0),(0,0,2,0,0)), \\
    &\quad\quad((0,0,1,0,1),(0,0,0,2,0)),((1,0,0,1,0),(0,0,0,0,2))\},
\end{align*}
and since 
$$3 (1,-2,0,0,1) + 6(1,1,-2,0,0) + (0,0,1,-2,1) + 2(1,0,0,1,-2) = (11,0,-11,0,0),$$
we must have $(1,0,-1,0,0) \in \Sat(L_N)$.  Theorem~\ref{t:main} then implies $F$ is not Ap\'ery.
\end{example}

Before proving Theorem \ref{t:main}, we develop some necessary machinery.  
We begin with Proposition~\ref{p:hyperplanes}, which ensures that in order to locate an Ap\'ery point in the interior of a face $F \subseteq \Cm$, it is equivalent to locate an Ap\'ery point on $\spann_\QQ F$. 

\begin{prop}\label{p:hyperplanes}
Fix a face $F \subseteq \Cm$, and let $V = \spann_\QQ F$.  If there exists an Ap\'ery point in $V$, then $F$ is an Ap\'ery face.  
\end{prop}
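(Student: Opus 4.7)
The plan is to produce an Ap\'ery point in $F^\circ$ by translating the hypothesized Ap\'ery point $p \in V$ by a suitable integer vector supported deep inside $F$. Since $F$ is a rational polyhedral cone, it admits integer ray generators $r_1, \ldots, r_k$, and the sum $q_0 = r_1 + \cdots + r_k$ lies in $F^\circ$. Setting $q = m q_0$, which remains in $F^\circ$ because cones are closed under positive scaling, yields an integer point of $F^\circ$ all of whose coordinates are divisible by $m$.

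Next I would form the family $p_N = p + Nq$ for $N \in \ZZ_{\ge 1}$, all of which lie in $\spann_\RR F$. The $i$-th coordinate satisfies
\[
(p_N)_i = p_i + N q_i \equiv p_i \equiv i \bmod m,
\]
so each $p_N$ is an integer Ap\'ery point. To place $p_N$ inside $F^\circ$ for some $N$, rescale: $\tfrac{1}{N} p_N = q + \tfrac{1}{N} p$ converges to $q$ in $\spann_\RR F$ as $N \to \infty$. Because $F^\circ$ is open in $\spann_\RR F$ and contains $q$, the rescaled point lies in $F^\circ$ for all sufficiently large $N$, and then $p_N \in F^\circ$ by closure of $F^\circ$ under positive dilations. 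This $p_N$ is the desired interior Ap\'ery point.

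The only ingredients that require verification are two standard facts about rational polyhedral cones: that $F$ contains an integer point in its relative interior (so that scaling by $m$ produces integer coordinates divisible by $m$), and that $F^\circ$ is simultaneously open in $\spann_\RR F$ and stable under positive dilation. Neither presents a genuine obstacle. The conceptual crux is just the observation that adding an integer vector whose coordinates are all divisible by $m$ preserves the Ap\'ery congruence, while the cone structure lets the corrective translation $\tfrac{1}{N} p$ be rescaled to be arbitrarily small so as to stay inside the open set $F^\circ$ around $q$.
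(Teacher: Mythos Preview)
Your argument is correct and follows essentially the same approach as the paper: both translate the given Ap\'ery point by a vector in $F$ whose coordinates are divisible by $m$, chosen large enough to land in $F^\circ$. The only cosmetic difference is that the paper picks a rational basis $v_1,\ldots,v_d \in F$ for $V$, scales each $v_i$ to have entries in $m\ZZ$, and then adds integer multiples of the $v_i$ so that the resulting coefficients are all positive (hence the point lies in $F^\circ$), whereas you use a single interior point $q = m(r_1+\cdots+r_k)$ and a limiting/openness argument; the underlying idea is identical.
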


\begin{proof}
Let $d = \dim F$, and assume there exists an Ap\'ery point $x \in V$. 
Fix rational vectors $v_1,\ldots,v_d \in F$ that form a basis for $V$.  By scaling appropriately, we may assume $v_1, \ldots, v_d$ have entries that are integer multiples of $m$.  Writing $x = c_1 v_1 + \cdots + c_d v_d$ where each $c_i \in \QQ$, the point
$$x' = x + \sum_{i = 1}^d  \max(\lceil -c_i \rceil, 1) v_i$$
is an Ap\'ery point since $x_i' \equiv x_i \bmod m$ for each $i$, and $x' \in F^\circ$ since it is a positive linear combination of $v_1, \ldots, v_d$.  This completes the proof.  
\end{proof}



Our next step is Theorem~\ref{t:general}, a general result we will apply to determine the existence of Ap\'ery points in the lattice $L = (\spann_\QQ F) \cap \ZZ^{m-1}$ using vectors in the orthogonal lattice $L^\perp$.  
We first prove a technical lemma.  


\begin{lemma}\label{l:general}
If $L \subseteq \ZZ^n$ is a saturated lattice of rank $d$, and $\overline L \subseteq \ZZ_m^n$ is its image under the canonical projection $\ZZ^n \to \ZZ_m^n$, then $\overline L$ is a free $\ZZ_m$-module of rank $d$ and $\ol{L^\perp} = \ol{L}^\perp$.  
\end{lemma}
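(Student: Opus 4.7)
The plan is to exploit the saturation of $L$ to produce a $\ZZ$-basis of $\ZZ^n$ that simultaneously controls both $L$ and, via its dual basis, $L^\perp$. Because $L$ is saturated of rank $d$, the quotient $\ZZ^n/L$ is a free $\ZZ$-module of rank $n-d$. By the structure theorem for finitely generated modules over a PID (equivalently, Smith normal form applied to a matrix whose rows generate $L$), this guarantees a $\ZZ$-basis $e_1,\ldots,e_n$ of $\ZZ^n$ such that $e_1,\ldots,e_d$ is a $\ZZ$-basis of $L$. Let $f_1,\ldots,f_n \in \ZZ^n$ denote the dual basis, characterized by $f_i \cdot e_j = \delta_{ij}$; since the change-of-basis matrix $[e_1\mid\cdots\mid e_n]$ is unimodular, its inverse is integral, so the $f_j$ really do lie in $\ZZ^n$ and form a $\ZZ$-basis of $\ZZ^n$.

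For the freeness claim, the images $\overline{e_1},\ldots,\overline{e_d}$ evidently span $\overline{L}$ over $\ZZ_m$. For independence, suppose $c_1\overline{e_1}+\cdots+c_d\overline{e_d}=0$ in $\ZZ_m^n$. Then $c_1 e_1+\cdots+c_d e_d \in m\ZZ^n$, and expanding the right-hand side in the $\ZZ$-basis $e_1,\ldots,e_n$ forces $c_i \equiv 0 \pmod m$ for each $i \le d$. Hence $\overline{L}$ is free of rank $d$.

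For the equality $\overline{L^\perp}=\overline{L}^\perp$, the inclusion $\overline{L^\perp} \subseteq \overline{L}^\perp$ follows immediately by reducing the defining orthogonality relation modulo $m$. For the reverse inclusion, I first record that $L^\perp = \spann_\ZZ(f_{d+1},\ldots,f_n)$: writing $v=\sum_j a_j f_j$, one computes $v \cdot e_i = a_i$, so $v \in L^\perp$ if and only if $a_1=\cdots=a_d=0$. Consequently $\overline{L^\perp} = \spann_{\ZZ_m}(\overline{f_{d+1}},\ldots,\overline{f_n})$. Now any $\overline{v}\in\overline{L}^\perp$ can be written $\overline{v} = \sum_j \overline{a_j}\,\overline{f_j}$, since $\overline{f_1},\ldots,\overline{f_n}$ is a $\ZZ_m$-basis of $\ZZ_m^n$, and the conditions $\overline{v}\cdot\overline{e_i}=\overline{a_i}=0$ in $\ZZ_m$ for $i \le d$ place $\overline{v}$ in $\spann_{\ZZ_m}(\overline{f_{d+1}},\ldots,\overline{f_n}) = \overline{L^\perp}$.

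The main obstacle, and the place where saturation is essential, is producing the basis $e_1,\ldots,e_n$ whose first $d$ vectors span $L$: without saturation, $\ZZ^n/L$ would have torsion and no such basis would exist, which is precisely the failure mode that would let elements of $\overline{L}^\perp$ fail to lift to $L^\perp$. Once this basis and its integral dual are in hand, both halves of the lemma follow by routine linear algebra over $\ZZ$ and its reduction modulo $m$.
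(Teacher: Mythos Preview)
Your proof is correct and takes a genuinely different route from the paper. The paper proves the freeness claim similarly, but for $\ol{L^\perp} = \ol{L}^\perp$ it inducts on $d$: given a basis $v_1,\ldots,v_d$ of $L$, it sets $T = \spann_\ZZ\{v_1,\ldots,v_{d-1}\}$, picks $w' \in T^\perp$ mapping to a generator of $T^\perp/L^\perp \cong \ZZ$, and argues by contradiction (invoking the double-dual identity $\ol{L}^{\perp\perp} = \ol{L}$ from \cite{WJK13}) that $|w' \cdot v_d| = 1$; a Lagrange-theorem count then finishes the step. Your approach instead exploits saturation once, to produce a $\ZZ$-basis $e_1,\ldots,e_n$ of $\ZZ^n$ extending a basis of $L$, and then the integral dual basis $f_1,\ldots,f_n$ makes both $L^\perp$ and $\ol{L}^\perp$ completely explicit. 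This is more elementary: it avoids induction, avoids the external reference, and reduces the entire lemma to a Smith-normal-form change of coordinates. The paper's argument, by contrast, keeps the lattice abstract and never names a basis of the ambient $\ZZ^n$, at the cost of a somewhat delicate inductive setup.
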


\begin{proof}
Let $v_1,\ldots,v_d$ be a $\ZZ$-basis for $L$.  Suppose that there exist $c_i \in \ZZ$ such that 
\[
\sum_{i=1}^d \overline c_i \overline v_i = \overline 0,
\qquad \text{that is,} \qquad
\sum_{i=1}^d c_iv_i = mz
\]
for some $z \in \ZZ^n$.  Since $L$ is saturated, $z \in L$, and since the $v_i$'s form a $\ZZ$-basis for $L$, $z$~is uniquely expressed as
\[
z=\sum_{i=1}^d \frac{c_i}{m}v_i,
\]
where the coefficients $\tfrac{c_i}{m}$ are integers.  Hence $m \mid c_i$ for each $i$, and we conclude $\ol v_1, \ldots, \ol v_d$ form a basis for $\ol L$.  This means $\ol L$ is a free $\ZZ_m$-module of rank $d$.  

For the final claim, we induct on $d$.  If $d=0$, then $\ol{L}^\perp = \ZZ_m^n = \ol{L^\perp}$ for every $m \ge 2$, so suppose $d > 0$.  Let $v_1,\ldots,v_d$ be a $\ZZ$-basis for $L$, let
\[
T = \spann_\ZZ\{v_1,\ldots,v_{d-1}\},
\]
and assume $\ol{T^\perp} = \ol{T}^\perp$ holds for every $m \ge 2$.  Fix a $\ZZ$-basis $w_1,\ldots,w_{n-d}$ for $L^\perp$.  Since $L^\perp$ and $T^\perp$ are saturated, $T^\perp/L^\perp \cong \ZZ$, so choosing any $w' \in T^\perp$ whose image is $1 \in \ZZ$ yields a $\ZZ$-basis $w_1,\ldots, w_{n-d}, w'$ is for $T^\perp$.  

Since $w' \notin L^\perp$, $w' \cdot v_d \neq 0$.  We claim $|w' \cdot v_d| = 1$.  For the sake of contradiction, suppose $w' \cdot v_d = \ell > 1$.  Applying the inductive hypothesis for $m = \ell$, we~have $\ol{L}^\perp \subsetneq \ol{T}^\perp = \ol{T^\perp}$, where the strict containment follows from $\ol T \subsetneq \ol L$ (they are free $\ZZ_\ell$-modules of distinct ranks) and $\ol L^{\perp\perp} = \ol L$ (by \cite[Theorem~4.1 and Corollary~4.5]{WJK13}).  However, $w_i \cdot v_j = 0$ for all $i$ and~$j$, $w' \cdot v_j = 0$ for $j < d$, and $w' \cdot v_d \equiv 0 \bmod \ell$, so $\overline{L}^\perp = \overline{T^\perp}$, which is a contradiction.  

Now, fix $m \ge 2$, and consider reduction modulo $m$.  We have $\ol{L^\perp} \subseteq \ol{L}^\perp \subsetneq \ol{T^\perp}$, and $k\ol{w'} \notin \ol{L}^\perp$ for $1 \le k < m$ since the claim in the preceding paragraph implies $kw' \cdot v_d = k \not\equiv 0 \bmod m$.  Thus, the order of $\overline{w'}$ in $\ol{T^\perp}/ \ol{L}^\perp$ is $m$, so $|\ol{T^\perp}/ \ol{L}^\perp| \geq m$. By~Lagrange's theorem, $|\ol{L}^\perp| = m^d = |\ol{L^\perp}|$, thereby completing the proof.  
\end{proof}

\begin{example}\label{counterexamples}
Both parts of Lemma~\ref{l:general} can fail if the lattice $L$ is not saturated.  Indeed, the image of the lattice $L \subseteq \ZZ^2$ generated by $(2,0)$ under the projection into $\ZZ_4^2$ is $\ol L = \{(\ol 0,\ol 0),(\ol 2,\ol 0)\}$, which has no basis since $\ol 2(\ol 2,\ol 0) = (\ol 0,\ol 0)$.  Moreover, $L^\perp = \spann_\ZZ\{(0,1)\}$ projects to $\ol{L^\perp} = \{(\ol 0,\ol 0),(\ol 0,\ol 1),(\ol 0,\ol 2),(\ol 0,\ol 3)\}$, so $(\ol 2,\ol 0) \in \ol{L}^\perp \setminus \ol{L^\perp}$.   
\end{example}

\begin{thm}\label{t:general}
Fix a saturated lattice $L \subseteq \ZZ^n$, $\gamma \in \ZZ^n$, and $m \in \ZZ_{\ge 2}$.  There exists $v \in L$ with $v_i \equiv \gamma_i \bmod m$ for each $i$ if and only if $w \cdot \gamma \equiv 0 \bmod m$ for every $w \in L^\perp$. 
\end{thm}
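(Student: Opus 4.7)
The plan is to reduce both sides of the equivalence to equivalent statements about the image $\ol L \subseteq \ZZ_m^n$ of $L$ under the canonical projection, and then apply Lemma~\ref{l:general} together with a double-perp identity for free submodules of $\ZZ_m^n$.

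First I would dispatch the forward direction with a one-line observation: if $v \in L$ satisfies $v_i \equiv \gamma_i \bmod m$ for all $i$ and $w \in L^\perp$, then $w \cdot v = 0$ in $\ZZ$, so reducing $\bmod\, m$ gives $w \cdot \gamma \equiv w \cdot v \equiv 0 \bmod m$.

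For the reverse direction, I would first rephrase both conditions in $\ZZ_m^n$. The existence of $v \in L$ with $v \equiv \gamma \bmod m$ (coordinate-wise) is equivalent to $\ol\gamma \in \ol L$, where $\ol\gamma$ is the reduction of $\gamma$. Since reduction commutes with the dot product, the condition ``$w \cdot \gamma \equiv 0 \bmod m$ for every $w \in L^\perp$'' is equivalent to ``$\ol u \cdot \ol\gamma = 0$ for every $\ol u \in \ol{L^\perp}$,'' using that $L^\perp$ surjects onto $\ol{L^\perp}$.

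The key step is to invoke Lemma~\ref{l:general}, which supplies the identity $\ol{L^\perp} = \ol L^\perp$ (using that $L$, and hence $L^\perp$, is saturated). Combined with the double-perp identity $\ol L^{\perp\perp} = \ol L$ for the free $\ZZ_m$-module $\ol L$ (which holds by \cite[Theorem~4.1 and Corollary~4.5]{WJK13}, and is already invoked inside the proof of Lemma~\ref{l:general}), I obtain the chain of equivalences
\[
\ol\gamma \in \ol L
\iff
\ol\gamma \in \ol L^{\perp\perp}
\iff
\ol u \cdot \ol\gamma = 0 \text{ for all } \ol u \in \ol L^\perp
\iff
\ol u \cdot \ol\gamma = 0 \text{ for all } \ol u \in \ol{L^\perp},
\]
which is exactly the desired biconditional. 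The main potential obstacle is ensuring that passage to $\ZZ_m^n$ is well behaved on both $L$ and $L^\perp$ simultaneously, but this is precisely what the saturatedness hypothesis is used for in Lemma~\ref{l:general}, so invoking that lemma cleanly removes this obstruction.
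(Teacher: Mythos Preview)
Your proposal is correct and follows essentially the same approach as the paper: reduce both conditions to statements about $\ol\gamma$ in $\ZZ_m^n$, then invoke Lemma~\ref{l:general} to get $\ol{L^\perp} = \ol L^\perp$ and \cite[Theorem~4.1 and Corollary~4.5]{WJK13} to get $\ol L^{\perp\perp} = \ol L$. The paper compresses the argument into a single line, but the logical content is identical.
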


\begin{proof}
Lemma~\ref{l:general} and \cite[Theorem~4.1 and Corollary~4.5]{WJK13} imply $\ol{L^\perp}^\perp = \overline{L}^{\perp\perp} = \ol L$.  To~prove the claim, it suffices to prove $\ol \gamma \in \ol L$ if and only if $\ol \gamma \in \ol{L^\perp}^\perp$, so we are done.  
\end{proof}

Letting $L = \ker(H_F) \cap \ZZ^{m-1}$ and $\gamma = (1,2,\ldots,m-1)$, a consequence of Theorem~\ref{t:general} is that $F$ is Ap\'ery if and only if $w \cdot \alpha \equiv 0 \bmod m$ for every $w \in \Row(H_F) \cap \ZZ^{m-1}$.  But Theorem~\ref{t:main} states it is enough to consider vectors in the lower-rank lattice $\Sat(L_N)$.  
To this end, we recall Theorem~\ref{t:kunz3}, a consequence of the proof of \cite[Theorem~4.3]{kunzfaces3} that relates $H_F$ to a presentation matrix of the Kunz nilsemigroup of $F$, as the final ingredient in our proof of Theorem~\ref{t:main}.  

\begin{thm}\label{t:kunz3}
Fix a non-degenerate face $F \subseteq \Cm$ with Kunz nilsemigroup $N$, and order the non-nil elements $p_1 \preceq \cdots \preceq p_{m-1}$ of $N$ so that $p_1,\ldots,p_k$ are the atoms of~$N$. Then there exist invertible matrices $R$ and $C$ such that
\[
RH_FC = 
\left(\begin{array}{r|r}
M_\rho & 0 \\
\\[-0.9em]
\hline
\\[-0.9em]
-A & I
\end{array}\right)
\]
where the columns on the right hand side are labeled by $p_1, \ldots, p_{m-1}$, $M_\rho$ is a presentation matrix of $N$, $I$ is an identity matrix, and $A$ is a matrix whose $i$-th row is a factorization of $p_{k+i}$ for each $i \leq m-k-1$.
\end{thm}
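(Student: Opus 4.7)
My plan is to construct the invertible matrices $R$ and $C$ explicitly by exploiting the algebraic structure of the Kunz nilsemigroup $N$. The matrix $C$ is simply the permutation sending the columns of $H_F$ into the order $p_1, \ldots, p_{m-1}$ (atoms first, then non-atom non-nil elements), which is clearly invertible. The main work lies in constructing $R$ so that the transformed matrix has the claimed block form. The key idea is that each row of $H_F$ corresponds to an equation $x_a + x_b = x_{a+b}$ coming from a nilsemigroup relation $a \oplus b = a + b$ in $N$, so forming integer linear combinations of rows is equivalent to ``chaining'' additions in $N$.

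For the bottom $(m-1-k)$ rows, I would proceed as follows. For each non-atom non-nil element $p_{k+i}$, fix a factorization $p_{k+i} = z_{i,1} p_1 + \cdots + z_{i,k} p_k$ over the atoms (such a factorization exists because every non-nil element of $N$ lies above atoms in the divisibility poset). Interpreting this factorization as a sequence of pairwise additions in $N$ and summing the corresponding rows of $H_F$ with appropriate multiplicities produces the single row $(-A_i \mid e_i)$ in the permuted column order, where $A_i = (z_{i,1}, \ldots, z_{i,k})$ and $e_i$ is the $i$-th standard basis vector of length $m-1-k$.

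For the top $|\rho|$ rows, I would then use the bottom block to eliminate all variables $x_{p_j}$ with $j > k$ from each remaining facet equation. A facet equation $x_a + x_b = x_{a+b}$ of $F$, after this substitution, becomes a relation of the form $(z - z') \cdot (x_{p_1}, \ldots, x_{p_k})^T = 0$, where $z$ and $z'$ are two factorizations of the same non-nil element of $N$; thus $(z, z')$ is a trade in $\ker\varphi_N$. Applying Proposition~\ref{p:kunzminpresnabla} to the factorization graphs of the non-nil elements, I would argue that the trades obtained in this way generate $\ker\varphi_N$, hence form a presentation $\rho$ of $N$ and yield the matrix $M_\rho$.

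The main obstacle is verifying that $R$ is invertible, which amounts to showing the transformation is reversible: one must recover each original facet equation $x_a + x_b = x_{a+b}$ from the block form. This should follow because the bottom block expresses each non-atom variable $x_{p_j}$ as an integer combination of atom variables, and combining these expressions with the presentation relations in the top block allows one to reconstruct any facet equation. A dimension count using Theorem~\ref{t:dimension} (which gives $\dim F = \mathsf e(F) - \rk M_\rho$) ensures the row counts on both sides match, and alternatively one could adapt the explicit row reduction carried out in the proof of \cite[Theorem~4.3]{kunzfaces3}.
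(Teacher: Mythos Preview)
The paper does not supply its own proof of this statement; it is quoted as a consequence of the proof of \cite[Theorem~4.3]{kunzfaces3}, so there is no in-paper argument to compare against directly. Your sketch is essentially the row-reduction argument carried out in that reference, and you even cite it as the fallback.

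The only real imprecision in your outline is the phrase ``each remaining facet equation.'' Because your bottom-block rows are \emph{sums} of several facet rows rather than individually selected rows of $H_F$, it is not clear which rows of $H_F$ are ``left over,'' and this is exactly why the invertibility (indeed, the squareness) of $R$ is opaque in your write-up. The clean fix is to designate, for each non-atom $p_{k+i}$, a single facet equation $x_a + x_b = x_{p_{k+i}}$ as \emph{primary}, chosen compatibly with your factorization $A_i$; processing the $m-1-k$ primary rows in a linear extension of $\preceq$ turns them into the bottom block $[-A \mid I]$ by a triangular (hence $\ZZ$-invertible) substitution, and the non-primary rows become the top block after clearing the non-atom columns against the bottom block. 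With this bookkeeping, $R$ is manifestly square and invertible, and your appeal to Proposition~\ref{p:kunzminpresnabla} goes through: the trade produced from a non-primary equation $x_a + x_b = x_c$ relates the chosen factorization of $c$ to the sum of the chosen factorizations of $a$ and $b$, and any $z \in \mathsf Z_N(c)$ is adjacent in $\nabla_c$ to one of these sums (take $a$ to be any atom in $\supp(z)$), so the added edges connect $\nabla_c$.
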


\begin{proof}[Proof of Theorem \ref{t:main}]
Fix a multiplicity $m$ and a non-degenerate face $F \subseteq \Cm$ with Kunz nilsemigroup $N$ and hyperplane matrix $H_F$. Applying Theorem~\ref{t:general} with
\[
L = \ker(H_F) \cap \ZZ^{m-1}
\qquad \text{and} \qquad
\gamma = (1,2,\ldots,m-1),
\] 
we have that $F$ is Ap\'ery if and only if $w \cdot \gamma \equiv 0 \bmod{m}$ for all $w \in \Row(H_F) \cap \ZZ^{m-1}$.  
It is now sufficient to show the statement
\begin{equation}\label{eq:maina}
w \cdot \gamma \equiv 0 \bmod{m}
\quad \text{for all} \quad
w \in \Row(H_F) \cap \ZZ^{m-1}
\end{equation}
is equivalent to 
\begin{equation}\label{eq:mainb}
w \cdot \alpha \equiv 0 \bmod{m}
\quad \text{for all} \quad
w \in \Sat(L_N).
\end{equation}
Letting $\widetilde{H}_F = R H_F C$ as in Theorem~\ref{t:kunz3} and $\widetilde{\gamma} = \gamma C$, \eqref{eq:maina} is equivalent to 
\begin{equation}\label{eq:mainc}
w \cdot\widetilde{\gamma}\equiv 0\bmod m
\quad \text{for all} \quad
w \in \Row(\widetilde{H}_F) \cap \ZZ^{m-1}
\end{equation}
since $\Row(\widetilde{H}_F) = \Row(H_FC)$.  

Fix $w = c\widetilde{H}_F \in \Row(\widetilde{H}_F) \cap \ZZ^{m-1}$ with $c \in \QQ^{m-1}$.  
Due to the form of~$\widetilde{H}_F$, we can write $c = (c', c'')$ so that $w = (c' M_\rho, 0) + (-c'' A, c'' I)$.  Since each coordinate of $c''$ coincides with a coordinate of $w$, each $c_i'' \in \ZZ$, so by the definition of $A$, we have $(-c'' A, c'' I) \cdot \widetilde \gamma \equiv 0 \bmod m$.  As such, $w \cdot \widetilde \gamma \equiv 0 \bmod m$ if and only if 
\[
(c' M_\rho, 0) \cdot \widetilde \gamma = (c' M_\rho) \cdot \alpha \equiv 0 \bmod m,
\]
which proves~\eqref{eq:mainc} is equivalent to~\eqref{eq:mainb} since $\Row(M_\rho) \cap \ZZ^k = \Sat(L_N)$.  
\end{proof}

\begin{cor}\label{c:aperylattices}
If a non-degenerate face $F \subset \Cm$ has Kunz nilsemigroup $N$ satisfying any of the following conditions, then $F$ is Ap\'ery:
\begin{enumerate}[(a)]
\item \label{eq:aperylatticesa}
$N$ has a saturated presentation lattice;

\item \label{eq:aperylatticesb}
$N$ has a trivial presentation lattice; or

\item \label{eq:aperylatticesc}
$F$ is a facet.

\end{enumerate}
\end{cor}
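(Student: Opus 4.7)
The plan is to deduce all three parts from Theorem~\ref{t:main} by examining when the divisibility condition $v \cdot \alpha \equiv 0 \bmod m$ holds across $\Sat(L_N)$.

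For part~(a), I would assume $L_N$ is saturated, so $\Sat(L_N) = L_N$, and thus it suffices to verify the condition on the generators of $L_N$. Each such generator has the form $z - z'$ for a trade $(z,z')$ in some presentation $\rho$ of $N$ and therefore satisfies $\varphi_N(z) = \varphi_N(z')$. Since the integer atoms $a_1,\ldots,a_k$ represent the nonzero classes in $\ZZ_m$, this equality lifts to $z \cdot \alpha \equiv z' \cdot \alpha \bmod m$. By linearity, $v \cdot \alpha \equiv 0 \bmod m$ for every $v \in L_N = \Sat(L_N)$, and Theorem~\ref{t:main} yields that $F$ is Ap\'ery. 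Part~(b) is then immediate, since the trivial lattice $\{0\}$ is saturated and falls under part~(a).

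For part~(c), I plan to reduce to part~(b) by showing that every non-degenerate facet $F \subseteq \Cm$ has trivial presentation lattice. Because $F$ is a facet, exactly one facet equation $x_i + x_j = x_{i+j}$ is saturated on $F$, so the Kunz nilsemigroup $N$ of $F$ admits exactly one non-nil sum of non-identity elements, namely $i \oplus j = i+j$. From this I will conclude that $i+j \in \ZZ_m$ is the only non-atom of $N$ and possesses the unique atom factorization $e_i + e_j$, while every atom has only its singleton factorization. By Proposition~\ref{p:kunzminpresnabla}, each $\nabla_a$ is then a single vertex, hence connected without auxiliary edges, so $\rho = \nothing$ is a presentation and $L_N = \{0\}$; applying part~(b) then completes the proof.

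I expect the main obstacle to lie in the uniqueness-of-factorization step within part~(c), specifically ruling out alternate factorizations of $i+j$ through other pairs of atoms or longer chains of atom sums. Any alternative pair $(b,c)$ with $b \oplus c = i+j$ and $\{b,c\} \ne \{i,j\}$ would force a second facet equality $x_b + x_c = x_{b+c}$ to hold on $F$, contradicting the facet assumption, and longer factorizations $a_{i_1} + a_{i_2} + \cdots$ with all partial sums non-nil would similarly require additional facet equalities beyond the single one defining $F$. A brief separate check is also needed to handle the case $i = j$, where $2i$ is the lone non-atom with unique factorization $2e_i$.
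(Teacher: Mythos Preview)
Your arguments for parts~(a) and~(b) match the paper's proof exactly. For part~(c), your approach is correct but differs from the paper's: you argue directly that a non-degenerate facet satisfies a single equation $x_i + x_j = x_{i+j}$, so $i+j$ is the unique non-atom and has the sole factorization $e_i + e_j$ (or $2e_i$), whence $\rho = \nothing$ and $L_N = \{0\}$. The paper instead argues the contrapositive via Theorem~\ref{t:dimension}: if $L_N \ne \{0\}$, then some presentation $\rho$ has $\rk(M_\rho) \ge 1$ and $N$ has at most $m-2$ atoms, so $\dim F \le (m-2) - 1 = m-3 < m-2$, and $F$ cannot be a facet. Your route is more elementary in that it avoids the dimension formula, at the cost of the factorization casework you anticipate; the paper's route is a two-line dimension count but relies on an additional structural result. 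Both are valid, and your handling of the ``longer factorization'' obstacle is sound once you note that any sub-sum of length $\ge 2$ must land on the unique non-atom $i+j$, after which partial cancellativity forces the remaining summand to be $0$.
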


\begin{proof}
Assume $F$ satisfies~\eqref{eq:aperylatticesa}.  Letting $\alpha$ the vector of atoms of $N$, every $\tau \in L_N$ satisfies $\tau \cdot \alpha \equiv 0 \bmod m$ by definition.  However, by assumption, $L_N = \Sat(L_N)$, so $F$ is Ap\'ery by Theorem~\ref{t:main}.  Since~\eqref{eq:aperylatticesb} implies~\eqref{eq:aperylatticesa}, this proves the first two claims.  

To complete the proof, we claim~\eqref{eq:aperylatticesc} implies~\eqref{eq:aperylatticesb}.  Indeed, if $N$ has a non-trivial presentation $\rho$, then $N$ has at most $m-2$ atoms and $\rk(M_\rho) \ge 1$.  Hence, Theorem~\ref{t:dimension} implies $\dim F \le m - 3 < \dim \Cm - 1$, so $F$ is not a facet.
\end{proof}

\begin{example}\label{e:saturatedlatticecounterexample}
Given Corollary~\ref{c:aperylattices}, one might hope that Ap\'ery faces are precisely those with saturated presentation lattice.  Unfortunately, this is not true in general.  The Kunz nilsemigroup $N$ of the numerical semigroup
\[
S = \langle 18, 41, 43, 83, 85, 92, 96, 99, 106 \rangle,
\]
seen in Figure~\ref{f:saturatedlatticecounterexample}, has presentation $\rho=\{(0, 3, 0, 1, 0, -2, 0, 0),(0, 1, 0, 3, 0, 0, -2, 0)\}$.  One can check that
\[
\tfrac{1}{2}(0, 3, 0, 1, 0, -2, 0, 0)+ \tfrac{1}{2}(0, 1, 0, 3, 0, 0, -2, 0)=(0,2,0,2,0,-1,-1,0) \in \Sat(L_N) \setminus L_N.
\]
Despite this, the face containing $S$ does not violate Theorem~\ref{t:main} since the right hand side above has the desired dot product with the vector of atoms of $N$.  
\end{example}

\begin{figure}[t!]
\centering
\includegraphics[scale=0.3]{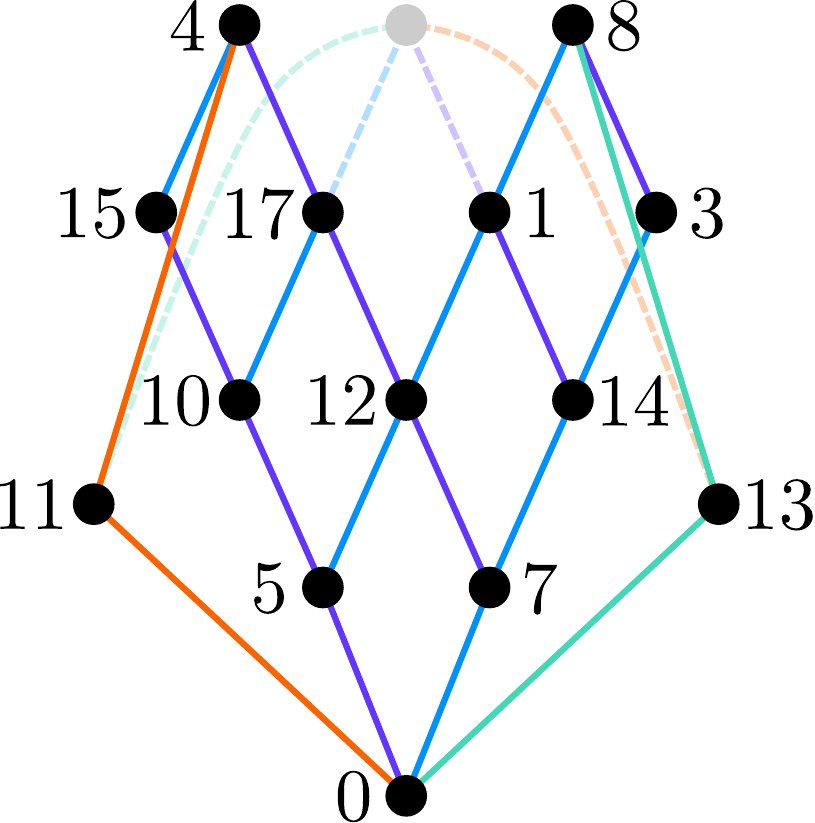}
\caption{Divisibility poset of an Ap\'ery Kunz nilsemigroup whose presentation lattice $L_\rho$ is not saturated.  Atoms 2, 6, 9, and 16 are omitted for clarity, and the opaque vertex and dashed edges identify the location of a trade in $\Sat(L_\rho) \setminus L_\rho$.}
\label{f:saturatedlatticecounterexample}
\end{figure}



One of the primary applications of Theorem~\ref{t:main} is an algorithm for determining whether a face $F$ is Ap\'ery using its Kunz nilsemigroup $N$ (Algorithm~\ref{a:aperycheck}).  
Table~\ref{tb:aperydata} gives some data on the distribution of Ap\'ery faces for $m \le 13$, computed using an implementation of Algorithm~\ref{a:aperycheck} in \texttt{SageMath} dependent on the packages \texttt{numsgps-sage} and \texttt{kunzpolyhedron}~\cite{numericalsgpssage,kunzposetsage}, as well as the \texttt{GAP} package \texttt{numericalsgps}~\cite{numericalsgpsgap}.

\begin{alg}\label{a:aperycheck}
Check if a given Kunz nilsemigroup $N$ is Ap\'ery.  
\begin{algorithmic}
\Function{IsApery}{$N$}
\State $\alpha \gets (a_1, \ldots, a_e)$ atoms of $N$
\State $\rho \gets$ a presentation of $N$
\State $B \gets$ a basis of $\Sat(L_\rho)$
\If{$v\cdot\alpha = 0$ for each $v \in B$}
\State \Return True
\Else
\State \Return False
\EndIf

\EndFunction
\end{algorithmic}
\end{alg}






We close the section with one final result that will be useful in the next section.  

\begin{cor}\label{cor-congruence-problem}
Fix a non-degenerate face $F \subseteq \Cm$ with Kunz nilsemigroup $N$.  If~$n(e_i - e_j) \in L_N$ for some $n \in \ZZ_{\ge 2}$ and $i \neq j$, then $F$ is not Ap\'ery. 
\end{cor}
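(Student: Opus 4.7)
The plan is to apply Theorem~\ref{t:main} by exhibiting an explicit vector $v \in \Sat(L_N)$ that fails the required divisibility condition, namely $v = e_i - e_j$. This is essentially a one-line consequence of the main theorem once the membership in the saturation is verified.

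First, I would verify that $e_i - e_j \in \Sat(L_N)$. Since $n(e_i - e_j) \in L_N$ with $n \ge 2$, we have
\[
e_i - e_j = \tfrac{1}{n} \cdot n(e_i - e_j) \in \spann_\QQ(L_N),
\]
and $e_i - e_j$ is clearly an integer vector, so by the definition $\Sat(L_N) = \spann_\QQ(L_N) \cap \ZZ^k$ the membership follows.

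Next, I would compute the dot product with $\alpha = (a_1, \ldots, a_k)$, obtaining $(e_i - e_j) \cdot \alpha = a_i - a_j$. Since the atoms of $N$ are distinct elements of $\ZZ_m \setminus \{0\}$, their integer representatives (taken in $\{1, \ldots, m-1\}$ as used to form $\alpha$) are pairwise distinct modulo $m$. Thus $a_i - a_j \not\equiv 0 \bmod m$. By the contrapositive of Theorem~\ref{t:main}, $F$ is not Ap\'ery.

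There is no real obstacle: the content of the corollary is just the observation that $\Sat(L_N)$ contains $e_i - e_j$ whenever $L_N$ contains a positive integer multiple of it, combined with the elementary fact that atoms have distinct residues modulo $m$. The work has already been done in establishing Theorem~\ref{t:main}.
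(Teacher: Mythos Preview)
Your proof is correct and follows essentially the same argument as the paper's: both verify $e_i - e_j \in \Sat(L_N)$ from the hypothesis, compute $(e_i - e_j)\cdot\alpha = a_i - a_j \not\equiv 0 \bmod m$ using that distinct atoms have distinct residues, and conclude via Theorem~\ref{t:main}.
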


\begin{proof}
Since $n(e_i - e_j) \in L_N$, $e_i - e_j\in \Sat(L_N)$. Letting $\alpha = (a_1,\ldots,a_k)$ be the vector of atoms of $N$, we have $(e_i-e_j) \cdot \alpha \equiv a_i-a_j \bmod m$, which is nonzero since $a_i$ and $a_j$ are distinct modulo $m$. Thus, $F$ is not Ap\'ery by Theorem~\ref{t:main}.
\end{proof}

\begin{table}
\centering
\begin{tabular}{l|*{6}{@{\,\,}l@{\,}r@{\,\,}|}@{\,\,}l@{\!}r@{\,\,}|}
\multicolumn{1}{c}{ } & \multicolumn{2}{c}{\!\!\!\!$m=6$} & \multicolumn{2}{c}{\!\!\!$m=8$} & \multicolumn{2}{c}{\!\!\!\!$m=9$} & \multicolumn{2}{c}{\!\!\!$m=10$} & \multicolumn{2}{c}{\!\!\!$m=11$} & \multicolumn{2}{c}{\!\!\!$m=12$} & \multicolumn{2}{c}{\!\!$m=13$} \\
$d$ & \ \# & \% & \ \# & \% & \ \# & \% & \ \# & \% & \ \# & \% & \ \# & \% & \ \# & \%
\\
\hline
1 & 6(2) & 75 & 23(2) & 54 & 48(0)   & 40 & 168(16)  & 78 & 142(70) & 30 & 1398(6)    & 84 & 1257(72) & 18 \\
2 & 6(0) & 21 & 83(0) & 34 & 111(13) & 14 & 909(8)   & 46 & 60(60)  &  1 & 12773(24) & 63 & 3681(852) &  4 \\
3 & 2(2) & 7  & 93(2) & 20 & 68(0)   &  4 & 1764(26) & 29 &         &    & 36213(123) & 43 & 708(780)  &  0 \\
4 &      &    & 34(0) &  9 & 12(12)  &  1 & 1510(0)  & 17 &         &    & 48354(24)  & 28 &         &    \\
5 &      &    & 3(3)  &  2 &         &    & 586(4)   &  9 &         &    & 34149(122) & 17 &         &    \\
6 &      &    &       &    &         &    & 92(0)    &  4 &         &    & 12871(0)   & 10 &         &    \\
7 &      &    &       &    &         &    & 4(4)     &  1 &         &    & 2424(22)   &  5 &         &    \\
8 &      &    &       &    &         &    &          &    &         &    & 196(0)     &  2 &         &    \\
9 &      &    &       &    &         &    &          &    &         &    & 5(5)       &  1 &         &    \\
\end{tabular}
\caption{For each $m$, the number of dimension $d$ non-Ap\'ery faces in~$\Cm$ (and how many are maximal under containment among non-Ap\'ery faces), along with the percentage of $d$-dimensional faces that are non-Ap\'ery.}
\label{tb:aperydata}
\end{table}

\section{Face dimension and embedding dimension}
\label{sec:edpairs}

In this section, we locate in Theorem~\ref{t:edimbounds} the extremal values of $\mathsf e(F)$ that can be attained for faces $F \subseteq \Cm$ for fixed $m$ and $\dim(F) \ge 2$, and identify in Proposition~\ref{p:nonapery} for which such values there are no Ap\'ery faces.  We also construct in Theorem~\ref{t:attainable} a family of Kunz nilsemigroups that attains those values and many of the values between them, demonstrating that for $\dim(F) \ge 2$, the set of attainable values is an interval.  

\begin{lemma}\label{l:lemmadimbound}
If $N$ is the Kunz nilsemigroup of a non-degenerate face $F \subseteq \Cm$, then 
\[
\dim(F) \geq e(F) - \!\!\!\!\! \sum_{p \in N - \{\infty\}} \!\!\!\! (|\mathsf{Z}_N(p)| - 1).
\]
\end{lemma}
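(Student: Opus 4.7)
The plan is to combine Theorem~\ref{t:dimension}, which expresses $\dim F$ as $\mathsf e(F) - \rk(M_\rho)$ for any finite presentation $\rho$ of $N$, with an upper bound on the size of some presentation $\rho$ coming from Proposition~\ref{p:kunzminpresnabla}. Since $\rk(M_\rho) \le |\rho|$ holds trivially (the rank is bounded by the number of rows), it suffices to construct a presentation $\rho$ of $N$ satisfying
\[
|\rho| \;\le\; \sum_{p \in N\setminus\{\infty\}} (|\mathsf Z_N(p)| - 1).
\]

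To build such a $\rho$, I would proceed component-by-component over non-nil elements of $N$. Fix $p \in N \setminus \{\infty\}$ and consider the factorization graph $\nabla_p$, which has $|\mathsf Z_N(p)|$ vertices. Choose a set $\rho_p$ of pairs of factorizations of $p$ of minimum cardinality such that $\nabla_p$, after adjoining one edge for each pair in $\rho_p$, becomes connected. Since a spanning tree on $|\mathsf Z_N(p)|$ vertices requires exactly $|\mathsf Z_N(p)| - 1$ edges, we have $|\rho_p| \le |\mathsf Z_N(p)| - 1$ regardless of how many edges $\nabla_p$ already contains.

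Taking $\rho = \bigcup_{p \in N\setminus\{\infty\}} \rho_p$, Proposition~\ref{p:kunzminpresnabla} ensures that $\rho$ is a presentation of $N$, because by construction the graph obtained from each $\nabla_p$ by adding the edges corresponding to the trades in $\rho$ is connected. Summing over $p$ yields the desired bound $|\rho| \le \sum_{p \in N\setminus\{\infty\}} (|\mathsf Z_N(p)| - 1)$, and combining with Theorem~\ref{t:dimension} gives the claim.

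There is no real obstacle: the only point that warrants care is checking that the edges we add for one value of $p$ do not interfere with the connectedness requirement for another value of $p'$, but this is automatic since each trade in $\rho_p$ relates two factorizations of $p$ and thus affects only $\nabla_p$ in the criterion of Proposition~\ref{p:kunzminpresnabla}.
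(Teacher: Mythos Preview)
Your proposal is correct and follows essentially the same approach as the paper: build a presentation $\rho$ by adding at most $|\mathsf Z_N(p)|-1$ edges per non-nil $p$ to connect each $\nabla_p$ (invoking Proposition~\ref{p:kunzminpresnabla}), then apply $\rk(M_\rho)\le|\rho|$ together with Theorem~\ref{t:dimension}. Your extra remarks about spanning trees and non-interference between different $\nabla_p$ are just explicit justifications of steps the paper leaves implicit.
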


\begin{proof}
For every $p\in N\setminus \{\infty\}$, the factorization graph $\nabla_p$ has $|\mathsf{Z}_N(p)|$ vertices, so it suffices to add at most $|\mathsf{Z}_N(p)|-1$ edges for $\nabla_p$ to be connected. By Proposition~\ref{p:kunzminpresnabla}, there is a presentation $\rho$ such that $|\rho|\leq \sum_{p \in N - \{\infty\}}(|\mathsf{Z}_N(p)|-1)$. Since $\rk(M_\rho)\leq |\rho|$, an application of Theorem~\ref{t:dimension} gives us the desired result.
\end{proof}

\begin{lemma} \label{l:coordsum}
Let $N$ be a Kunz nilsemigroup, $m = \mathsf m(N)$, and $e = \mathsf e(N)$.  If $p \in N$ is non-nil and $A \subseteq \mathsf Z_N(p)$ is the set of factorizations of $p$ with coordinate sum $2$, then:
\begin{enumerate}[(a)]
\item $|A| \le \lfloor \tfrac{1}{2}e\rfloor + 1$;
\item if $m$ is odd and $e$ is even, $|A|\leq \tfrac{1}{2}e$; and
\item if $m$ is even and $|A| = \lfloor \tfrac{1}{2}e\rfloor+1$, then $p = 2q$ for some $q \in N$.
\end{enumerate}
\end{lemma}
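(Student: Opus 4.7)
The plan is to encode the set $A$ as a (multi)graph $G$ on the vertex set $\{1,\ldots,e\}$, placing an edge $\{i,j\}$ (a loop when $i=j$) whenever $a_i+a_j=p$, where $a_1,\ldots,a_e$ are the atoms of $N$. Since each factorization in $A$ has coordinate sum $2$, the map $A \to E(G)$ sending $2e_i \mapsto \{i,i\}$ and $e_i+e_j \mapsto \{i,j\}$ is a bijection, so $|A| = |E(G)|$. The task reduces to bounding $|E(G)|$ using the parities of $m$ and $e$.

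First I would establish the structural claim that $G$ is a disjoint union of loops and a matching, that is, every vertex lies in at most one edge of $G$ (including a loop). The input is partial cancellativity of $N$ from Theorem~\ref{t:kunznilsemigroup}: if $i$ lies in edges $\{i,j\}$ and $\{i,k\}$, then $a_i + a_j = a_i + a_k = p \ne \infty$ gives $a_j = a_k$, and since distinct atoms are distinct nonzero elements of $\ZZ_m$, we conclude $j = k$.

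Next I would bound the loop count $\ell = |L|$, where $L = \{i : 2a_i = p\}$. If $2a_i = 2a_j = p$ with $i \ne j$, then $a_i - a_j$ has order dividing $2$ in $\ZZ_m$. When $m$ is odd, the only such element is $0$, so $\ell \le 1$. When $m$ is even, $a_i - a_j \in \{0, m/2\}$, and three distinct loops at $i,j,k$ would force $a_i - a_j = a_j - a_k = m/2$, whence $a_i = a_k$, contradicting distinctness; thus $\ell \le 2$.

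Combining the two structural results, the $\ell$ loop-vertices lie in no other edges, and the remaining $e - \ell$ vertices carry a matching, so
\[
|A| \;=\; |E(G)| \;\le\; \ell + \lfloor (e - \ell)/2 \rfloor.
\]
Evaluating for $\ell \in \{0,1,2\}$ gives $\lfloor e/2\rfloor$, $\lceil e/2\rceil$, and $\lfloor e/2\rfloor + 1$ respectively. Part (a) follows by taking the maximum, attained at $\ell = 2$. For (b), when $m$ is odd we have $\ell \le 1$, so $|A| \le \lceil e/2\rceil = e/2$ when $e$ is even. For (c), when $m$ is even and $|A| = \lfloor e/2\rfloor + 1$, this value exceeds the $\ell = 0$ bound $\lfloor e/2\rfloor$, so $\ell \ge 1$; choosing any $i \in L$ and setting $q = a_i \in N$ gives $p = 2q$. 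I do not expect a serious obstacle; the only delicate point is correctly handling the $\ell = 1$ sub-case in (c), where the bound $\lceil e/2\rceil$ still meets $\lfloor e/2\rfloor + 1$ when $e$ is odd, but this case also yields a loop and hence the desired $p = 2q$.
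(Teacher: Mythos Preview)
Your proof is correct and follows essentially the same approach as the paper's: both arguments hinge on (i) partial cancellativity forcing the supports of factorizations in $A$ to be pairwise disjoint, (ii) bounding the number of singleton-support factorizations (your ``loops'') via the solution count of $2q=p$ in $\ZZ_m$, and (iii) combining these into the bound $\ell + \lfloor (e-\ell)/2\rfloor$. The only difference is packaging---you phrase the argument in graph-theoretic language (loops plus a matching on the remaining vertices) while the paper speaks directly of singleton-support versus two-atom-support factorizations with disjoint supports.
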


\begin{proof}

We first prove part~(b).  Since $p = 2q$ has a unique solution for $q \in \ZZ_m$ when $m$ is odd, $p$ has at most one singleton-support factorization.  Further, all factorizations in $A$ must have disjoint support by partial cancellativity of $N$.  If $p$ has no singleton-support factorizations, then certainly $|A| \leq \tfrac{1}{2}e$.  Otherwise,
$p$ has at most $\lfloor\tfrac{1}{2}(e-1)\rfloor = \tfrac{1}{2}e - 1$ factorizations supported on 2 atoms,
so $|A| \leq \tfrac{1}{2}e$.

We next prove part~(a).  Since $p = 2q$ has at most two solutions for $q\in \ZZ_m$, $p$ has at most two singleton-support factorizations.  This means $p$ has either zero, one, or two singleton-support factorizations, and proceeding as before in each case, one readily shows $|A| \le \lfloor \tfrac{1}{2}e\rfloor + 1$.  

This leaves part~(c).  If $p = 2q$ has no solutions $q \in \ZZ_m$, then all factorizations in $A$ are supported on 2 atoms, and
there can be at most $|A| \le \lfloor \tfrac{1}{2}e \rfloor$ such factorizations.
\end{proof}

\begin{lemma}\label{l:nocovers}
Fix a non-degenerate face $F \subseteq \Cm$, and let $N$ be its Kunz nilsemigroup and $e = \mathsf e(N) = m-3$.  If some non-nil element of $N$ has a factorization of coordinate sum $3$, then $\dim(F)\geq \lfloor \tfrac{1}{2}(m-3)\rfloor.$
\end{lemma}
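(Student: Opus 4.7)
The plan is to combine Lemma~\ref{l:lemmadimbound} with a detailed structural analysis of $N$. Since $\mathsf e(N) = m-3$, the set $N \setminus \{\infty\}$ consists of the identity, the $e = m-3$ atoms, and exactly two additional non-identity non-atom elements, which I call $q_1$ and $q_2$. The hypothesized coordinate-sum-$3$ factorization belongs to one of these two elements (atoms have only the trivial singleton-support factorization), and after relabeling we may assume it is a factorization of $q_1$.

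The first phase extracts structural consequences. Given $q_1 = a_i + a_j + a_k$, the partial sum $a_i + a_j$ is a sum of two non-identity elements in $N$, so it is neither an atom nor the identity, and it cannot equal $q_1$ (which would force $a_k$ to be the identity); hence $a_i + a_j = q_2$ and $q_2 + a_k = q_1$. Partial cancellativity yields a unique atom $a^*$ with $q_2 + a^* = q_1$. A brief argument further shows $q_1 + b \ne q_2$ for every atom $b$, since otherwise $q_2 = q_1 + b = q_2 + a^* + b$ would force $a^* + b$ to be the identity. Iterating the partial-sum analysis then rules out coordinate-sum-$\ge 3$ factorizations of $q_2$ and coordinate-sum-$\ge 4$ factorizations of $q_1$, and shows that the coordinate-sum-$3$ factorizations of $q_1$ are in bijection with the coordinate-sum-$2$ factorizations of $q_2$ via adjoining $a^*$.

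The second phase analyzes the factorization graphs, which by Proposition~\ref{p:kunzminpresnabla} yields a presentation $\rho$ with $|\rho| = (c_{q_1} - 1) + (c_{q_2} - 1)$. Partial cancellativity forces distinct coordinate-sum-$2$ factorizations of $q_1$, and of $q_2$, to have disjoint supports, so $\nabla_{q_2}$ is edgeless and $c_{q_2} = |\mathsf Z_N(q_2)|$. In $\nabla_{q_1}$, the coordinate-sum-$3$ factorizations all contain $a^*$ and form a single clique, and a coordinate-sum-$2$ factorization of $q_1$ attaches to this clique precisely when its support meets the set $\tilde A$ of atoms appearing in coordinate-sum-$2$ factorizations of $q_2$. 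Writing $A_{\mathrm{iso}}$ for the coordinate-sum-$2$ factorizations of $q_1$ whose supports are disjoint from $\tilde A$, this gives $c_{q_1} = 1 + |A_{\mathrm{iso}}|$, and hence $|\rho| = |A_{\mathrm{iso}}| + |\mathsf Z_N(q_2)| - 1$.

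The main obstacle is then to bound $|A_{\mathrm{iso}}| + |\mathsf Z_N(q_2)|$ by $\lceil \tfrac{e}{2} \rceil + 1$. The key observation is that the supports of $A_{\mathrm{iso}}$-factorizations are mutually disjoint and disjoint from $\tilde A$, so letting $p_{1,\mathrm{iso}}, p_2$ count two-atom factorizations and $f_{1,\mathrm{iso}}, f_2$ count doubling factorizations in $A_{\mathrm{iso}}$ and $\mathsf Z_N(q_2)$ respectively, counting atoms yields $2(p_{1,\mathrm{iso}} + p_2) + (f_{1,\mathrm{iso}} + f_2) \le e$, whence $|A_{\mathrm{iso}}| + |\mathsf Z_N(q_2)| \le \tfrac{1}{2}(e + f_{1,\mathrm{iso}} + f_2)$. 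Since $2x = q_j$ has at most $\gcd(2,m)$ solutions in $\ZZ_m$ and $e = m-3$ has parity opposite to $m$, a short case analysis on the parity of $m$ produces the bound $\lceil \tfrac{e}{2} \rceil + 1$. Combining with $\dim F \ge \mathsf e(F) - \rk(M_\rho) \ge e - |\rho|$ from Theorem~\ref{t:dimension} gives $\dim F \ge \lfloor \tfrac{1}{2}(m-3) \rfloor$, as desired.
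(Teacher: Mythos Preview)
Your proof is correct, but it takes a considerably more elaborate route than the paper's. The paper extracts a sharper structural conclusion from the partial-sum analysis: once you know a coordinate-sum-$3$ factorization $q_1 = b_1 + b_2 + b_3$ exists, \emph{every} pair $b_i + b_j$ must equal $q_2$, so partial cancellativity forces $b_1 = b_2 = b_3 = a^*$; but then applying the same reasoning to an arbitrary factorization $q_2 = c_1 + c_2$ (via $q_1 = c_1 + c_2 + a^*$) forces $c_1 = c_2 = a^*$ as well. Thus $q_2$ has the \emph{unique} factorization $2a^*$, and $q_1$ has the unique coordinate-sum-$3$ factorization $3a^*$. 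With $|\mathsf Z_N(q_2)| = 1$ in hand, the paper simply bounds $|\mathsf Z_N(q_1)|$ via Lemma~\ref{l:coordsum} and invokes Lemma~\ref{l:lemmadimbound} directly.

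Your framework of the bijection, $\tilde A$, $A_{\mathrm{iso}}$, and the component count is valid but collapses entirely in this situation: $\tilde A = \{a^*\}$, no coordinate-sum-$2$ factorization of $q_1$ can involve $a^*$ (since $q_1 - a^* = q_2$ is not an atom), so $A_{\mathrm{iso}}$ is all of them, the coordinate-sum-$3$ ``clique'' is a single vertex, and $c_{q_1} = |\mathsf Z_N(q_1)|$; your refined component bound therefore coincides exactly with the cruder bound of Lemma~\ref{l:lemmadimbound}. Your atom-counting and parity analysis then reproduce, in a roundabout way, the bound on $|\mathsf Z_N(q_1)|$ that Lemma~\ref{l:coordsum} gives immediately. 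The extra machinery would pay off if the two non-atoms could interact nontrivially, but the hypothesis forces such rigid structure that it does not. In short: your argument works, but you are carrying around generality that the situation does not support; recognizing $|\mathsf Z_N(q_2)| = 1$ up front would let you discard most of Phases~2 and~3.
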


\begin{proof}
Suppose $p,q \in N \setminus \{0, \infty\}$ are the only two non-atoms.  If $q$ has a factorization with coordinate sum 3, then $q = p + a$ for some atom $a$.  Moreover, if $p = a_1 + a_2$ is any factorization of $p$, then $q = a_1 + a_2 + a$, and since $a_1 + a \ne q$ and $a_2 + a \ne q$ are not atoms, $p = a_1 + a = a_2 + a$.  Partial cancellativity implies $a_1 = a_2 = a$.  In~particular, $q = 3a$ is the only factorization of $q$ with coordinate sum 3, and $p = 2a$ is the only factorization of $p$.  

If $m$ is even, $q$ has at most $\tfrac{1}{2}(m-4) + 1$ coordinate sum $2$ factorizations by Lemma~\ref{l:coordsum}(a), so $|\mathsf{Z}_N(q)| \le \tfrac{1}{2}(m-4) + 2 = \tfrac{1}{2}m$. If $m$ is odd, $q$ has at most $\tfrac{1}{2}(m-3)$ coordinate sum 2 factorizations by Lemma~\ref{l:coordsum}(b), so $|\mathsf{Z}_N(q)| \le \tfrac{1}{2}(m-1)$.  In either case, $|\mathsf{Z}_N(q)| \leq \lfloor \tfrac{1}{2}m \rfloor$, and 
\[
\dim(F)
\ge (m - 3) - (\lfloor\tfrac{1}{2}m\rfloor - 1)
= \lfloor\tfrac{1}{2}(m-3)\rfloor
\]
then follows from Lemma~\ref{l:lemmadimbound}.  
\end{proof} 

\begin{thm}\label{t:edimbounds}
Fix a multiplicity $m \geq 7$ and a non-degenerate face $F \subseteq \Cm$, and let $e = \mathsf e(F)$ and $d = \dim F$.  Let $N$ denote the Kunz nilsemigroup of $F$.  
\begin{enumerate}[(a)]
\item 
If $e = m - 1$, then $d = m - 1$.  

\item 
If $e = m - 2$, then $\dfloor \le d \le e$.  

\item 
If $e = m - 3$ and $m$ is odd, then $2 \le d \le e$.

\end{enumerate}
\end{thm}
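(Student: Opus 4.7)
The plan is to handle the upper bounds uniformly and then treat the three lower bounds in turn. The upper bound $d \leq e$ in each part is immediate from the general inequality $\dim F \leq \mathsf e(F)$ stated in the introduction, so the work lies entirely in the lower bounds. For those, I will apply Lemma~\ref{l:lemmadimbound}, which gives
\[
d \geq e - \!\!\sum_{p \in N \setminus \{\infty\}}\!\! (|\mathsf Z_N(p)| - 1).
\]
Atoms contribute zero to this sum since each atom has only its singleton factorization, and for a non-degenerate face $N \setminus \{0, \infty\}$ has exactly $m-1$ elements, so the number of non-atom non-identity contributors equals $m - 1 - e$. In parts (a), (b), and (c) this count is $0$, $1$, and $2$, respectively.

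Part (a) is then automatic: the sum is empty, so $d \geq m-1$, matching the upper bound. For part (b), let $p$ denote the unique non-atom non-identity element of $N$. The key step I will carry out is to show that every factorization of $p$ has coordinate sum exactly $2$. Given $p = a_1 + \cdots + a_k$ with $k \geq 3$, I will consider the partial sum $q = a_1 + \cdots + a_{k-1}$; it is non-nil (since $q + a_k = p$ is), not equal to $0$ (else $p = a_k$ would be an atom), and not an atom itself (being a sum of at least two non-identity elements). Since $p$ is the only remaining non-atom non-identity element, $q = p$, and then $p + a_k = p$ forces $a_k = 0$ by partial cancellativity, a contradiction. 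Lemma~\ref{l:coordsum}(a) will then bound $|\mathsf Z_N(p)|$ by $\lfloor \tfrac{1}{2}e \rfloor + 1$, and Lemma~\ref{l:lemmadimbound} will yield $d \geq e - \lfloor \tfrac{1}{2}e \rfloor = \lceil \tfrac{1}{2}(m-2) \rceil$, which equals $\dfloor$ after a short parity check.

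For part (c), I will split on whether some non-nil element of $N$ admits a factorization of coordinate sum $3$. If one does, Lemma~\ref{l:nocovers} applies directly and gives $d \geq \lfloor \tfrac{1}{2}(m-3) \rfloor \geq 2$ (using $m \geq 7$). If none does, then the two non-atom non-identity elements $p, q$ have all their factorizations of coordinate sum exactly $2$ (no singleton factorizations since they are non-atoms, no longer factorizations by assumption). Since $m$ is odd and $e = m-3$ is even, Lemma~\ref{l:coordsum}(b) will bound each of $|\mathsf Z_N(p)|$ and $|\mathsf Z_N(q)|$ by $\tfrac{1}{2}(m-3)$, and Lemma~\ref{l:lemmadimbound} will then give $d \geq (m-3) - 2(\tfrac{1}{2}(m-3) - 1) = 2$.

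The main obstacle I anticipate is the partial-cancellativity argument in part (b) (and the analogous argument packaged inside Lemma~\ref{l:nocovers} for part (c)) that rules out long factorizations of the non-atoms of $N$. This step is genuinely structural rather than combinatorial, exploiting that $p$ (respectively $p,q$) exhaust the non-atom non-identity part of $N$; once these length restrictions are established, the remaining steps reduce to substituting the bounds from Lemma~\ref{l:coordsum} into Lemma~\ref{l:lemmadimbound} together with a brief parity calculation.
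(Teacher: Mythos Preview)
Your proposal is correct and follows the paper's proof almost exactly: the upper bound via $d \le e$, and the lower bounds via Lemma~\ref{l:lemmadimbound} combined with Lemma~\ref{l:coordsum} (parts~(a) and~(b) as appropriate) and Lemma~\ref{l:nocovers} in the coordinate-sum-$3$ case. Your explicit partial-cancellativity argument in part~(b), showing that the unique non-atom has only coordinate-sum-$2$ factorizations, is a detail the paper simply asserts without justification.
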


\begin{proof}
By Theorem~\ref{t:dimension}, $d \le e$.  If $e = m-1$, each non-nil element of $N$ is an atom and therefore has a single factorization. Applying Lemma~\ref{l:lemmadimbound}, we have $d = m-1$.

If $e = m-2$, there is a unique non-atom $p \in N \setminus \{0, \infty\}$.  Every factorization of $p$ has coordinate sum 2, so $|\mathsf{Z}_N(p)|\leq \lfloor \tfrac{1}{2}(m-2)\rfloor + 1$ by Lemma~\ref{l:coordsum}.  Lemma~\ref{l:lemmadimbound} yields
\[
d \geq (m-2) - \lfloor \tfrac{1}{2}(m-2) \rfloor = \dfloor.
\]

Lastly, suppose $e = m - 3$ and $m$ is odd, and let $p,q \in N \setminus \{0, \infty\}$ be the non-atoms.  
If one of $p$ or $q$ has a factorization of coordinate sum $3$, Lemma~\ref{l:nocovers} gives 
\[
d \ge \lfloor \tfrac{1}{2}(m-3) \rfloor \ge 2
\]
since $m \geq 7$.
Otherwise, all factorizations of $p$ and $q$ have coordinate sum 2, Lemma~\ref{l:coordsum} gives $|\mathsf{Z}_N(p)| \le \tfrac{1}{2}(m-3)$ and $|\mathsf{Z}_N(q)| \leq \tfrac{1}{2}(m-3)$.  This yields
\[
d \geq (m-3) - (m-5) = 2,
\]
after applying Lemma~\ref{l:lemmadimbound}.  
\end{proof}

\begin{prop}\label{p:nonapery}
Fix $m \ge 7$.  In each of the following cases, if $F \subseteq \Cm$ is a non-degenerate face with $\dim F = d$ and $\mathsf e(F) = e$, then $m$ is even and $F$ is not Ap\'ery:
\begin{enumerate}[(a)]
\item 
$d = \tfrac{1}{2}m - 1$ and $e = m - 2$; or

\item 
$d = 1$ and $e = m - 3$.

\end{enumerate}
\end{prop}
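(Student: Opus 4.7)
The plan is to prove both parts by exhibiting distinct indices $i \ne j$ and an integer $n \geq 2$ with $n(e_i - e_j) \in L_N$, and then invoking Corollary~\ref{cor-congruence-problem}.

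For part~(a), since $d = \tfrac{1}{2}m - 1$ must be a nonnegative integer, $m$ is forced to be even. Let $p$ denote the unique non-atom of $N$. Any factorization of $p$ of coordinate sum $\geq 3$, say $p = a_{i_1} + a_{i_2} + a_{i_3}$, would force (via partial cancellativity and the fact that atoms are not sums of non-identity elements) $a_{i_1} + a_{i_2} = p$ and hence $a_{i_3} = 0$, a contradiction. So every factorization of $p$ has coordinate sum $2$. Combining Lemma~\ref{l:lemmadimbound} and Lemma~\ref{l:coordsum}(a) then forces $|\mathsf{Z}_N(p)| = \tfrac{1}{2}m$, and Lemma~\ref{l:coordsum}(c) yields $p = 2r$ for some non-nil $r$, which must be an atom (the cases $r = 0$ and $r = p$ are immediately contradictory). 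Writing $s_p$ and $t_p$ for the numbers of singleton ($p = 2a$) and pair ($p = a + a'$ with $a \ne a'$) factorizations of $p$, partial cancellativity implies each atom appears in at most one factorization, so $s_p + 2t_p \le m - 2$. Combined with $s_p + t_p = \tfrac{1}{2}m$, this gives $s_p \ge 2$, so $p = 2a = 2a'$ for distinct atoms $a, a'$, and hence $2(e_a - e_{a'}) \in L_N$, finishing the case.

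For part~(b), let $p$ and $q$ be the two non-atoms of $N$. I would first rule out $m$ odd: under $d = 1$, Lemma~\ref{l:nocovers} prohibits any coordinate-sum-$\ge 3$ factorization of $p$ or $q$ (otherwise $d \ge \lfloor \tfrac{1}{2}(m-3)\rfloor \ge 2$ since $m \ge 7$). For $m$ odd, $e = m - 3$ is even, so Lemma~\ref{l:coordsum}(b) combined with Lemma~\ref{l:lemmadimbound} forces $d \ge (m-1) - 2 \cdot \tfrac{1}{2}(m-3) = 2$, contradicting $d = 1$. Hence $m$ is even, and the analogous chain of reasoning using Lemma~\ref{l:coordsum}(a) pins $|\mathsf{Z}_N(p)| = |\mathsf{Z}_N(q)| = \tfrac{1}{2}m - 1$ with all factorizations of coordinate sum~$2$. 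Lemma~\ref{l:coordsum}(c) then yields $p = 2a$ and $q = 2b$ for atoms $a, b$ (and $a \ne b$ since $p \ne q$), and the same counting argument as in part~(a) shows $s_p, s_q \in \{1, 2\}$.

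If $s_p \ge 2$ or $s_q \ge 2$, the argument from part~(a) closes the case. Otherwise $s_p = s_q = 1$, and counting then forces all $m - 3$ atoms to appear in exactly one factorization of $p$ and one of $q$; in particular, the $\tfrac{1}{2}m - 2$ pair factorizations of $p$ partition the $m - 4$ atoms other than $a$ into pairs. Summing the trades $2e_a - e_{i_k} - e_{j_k}$ over all pair factorizations of $p$ yields $(m-3)e_a - \mathbf{1} \in L_N$, where $\mathbf{1}$ denotes the all-ones vector indexed by the atoms; analogously $(m-3)e_b - \mathbf{1} \in L_N$. Subtracting yields $(m-3)(e_a - e_b) \in L_N$ with $m - 3 \ge 2$, and Corollary~\ref{cor-congruence-problem} applies. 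The main obstacle is precisely this $s_p = s_q = 1$ subcase, where neither $p$'s nor $q$'s factorizations alone produce a vector of the form $n(e_i - e_j) \in L_N$ directly; the key trick is that aggregating all pair-trades isolates the unpaired singleton atom, allowing the all-ones contributions from $p$ and $q$ to cancel upon subtraction.
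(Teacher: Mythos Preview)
Your proof is correct. Part~(a) tracks the paper's argument closely (the paper's inequality chain $\tfrac{1}{2}m = e - d + 1 \le |\mathsf Z_N(p)| \le \lfloor \tfrac{1}{2}m \rfloor$ simultaneously forces $m$ even and $|\mathsf Z_N(p)| = \tfrac{1}{2}m$, after which both arguments conclude $s_p = 2$). In part~(b) your setup again matches the paper---Lemma~\ref{l:nocovers} forces all factorizations to have coordinate sum $2$, Lemma~\ref{l:coordsum}(b) rules out $m$ odd, and Lemma~\ref{l:lemmadimbound} pins $|\mathsf Z_N(p)| = |\mathsf Z_N(q)| = \tfrac{1}{2}m - 1$---but the endgame differs. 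The paper splits on whether $p = 2q$ holds in $\ZZ_m$: if not, the atom $p - q$ is absent from every factorization of $p$, so $s_p + 2t_p \le m-4$ forces $s_p \ge 2$; if both $p = 2q$ and $q = 2p$, the paper deduces $6 \mid m$, writes $p = 2d$, $q = 4d$ for an order-$6$ element $d$, exhibits an explicit presentation indexed by $d$, and sums its rows to obtain $(m-3)(e_{5d} - e_d) \in L_N$. Your split on $s_p, s_q \in \{1,2\}$ reaches the same hard case, but your aggregation---summing all pair-trades of $p$ against the singleton $2e_a$ to isolate $(m-3)e_a - \mathbf 1$, doing the same for $q$, and subtracting---is cleaner and avoids naming $d$ or verifying an explicit presentation. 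The paper's route extracts more structure (the hard case forces $6 \mid m$); yours reaches the conclusion with less bookkeeping.
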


\begin{proof}
Let $N$ be the Kunz nilsemigroup of $F$.  First, suppose $d = \tfrac{1}{2}m - 1$ and $e = m - 2$, and let $p \in N \setminus \{0, \infty\}$ be the unique non-atom.  By Lemmas~\ref{l:lemmadimbound} and~\ref{l:coordsum}(a), 
\[
\tfrac{1}{2}m
= e - d + 1
\le |\mathsf{Z}_N(p)|
\le \lfloor \tfrac{1}{2}e \rfloor + 1
= \lfloor \tfrac{1}{2} m \rfloor
\]
so $|\mathsf{Z}_N(p)| = \frac{1}{2}m = \tfrac{1}{2}e + 1$.  Necessarily, $p$ has two singleton-support factorizations, i.e., $p = 2a = 2a'$ for distinct atoms $a, a' \in N$, and so by Corollary~\ref{cor-congruence-problem}, $F$ is not Ap\'ery.

Next, suppose $d = 1$ and $e = m - 3$, and let $p,q \in N \setminus \{0, \infty\}$ denote the non-atoms.  By Lemma~\ref{l:nocovers}, every factorization of $p$ and $q$ has coordinate sum $2$.
As in part~(a), 
\[
m - 2
= e - d + 2
\le |\mathsf{Z}_N(p)| + |\mathsf{Z}_N(q)|
\le 2(\lfloor \tfrac{1}{2}e \rfloor + 1)
= 2\lfloor \tfrac{1}{2} (m - 1) \rfloor.
\]
By Lemma~\ref{l:coordsum}(b) and the first inequality, $m$ is even.  This forces 
\[
|\mathsf{Z}_N(p)| + |\mathsf{Z}_N(q)| = m - 2
\qquad \text{and thus} \qquad
|\mathsf{Z}_N(p)| = |\mathsf{Z}_N(q)| = \tfrac{1}{2}m - 1 = \tfrac{1}{2}(e + 1).
\]
There are two cases.  First, if $p \ne 2q \in \ZZ_m$, then the atom $p - q$ does not appear in any factorization of $p$, meaning $p$ has two singleton-support factorizations and $F$ is not Ap\'ery by Corollary~\ref{cor-congruence-problem}.  As such, assume $p = 2q$ and $q = 2p$.  This forces $p = 2d$ and $q = 4d$ for some $d \in \ZZ_m$ with order 6.  For~each atom $i \in N$, let the standard basis vector $e_i$ denote the factorization of $i$.  Letting $I = [1,3d-1]\setminus \{d\}$,
\[
\rho=\{(e_{d+i}+e_{d-i},2e_d):i\in I\}\cup\{(e_{5d+j}+e_{5d-j},2e_{5d}):j\in I\}
\]
is a presentation of $N$. Letting $\tau_i = (e_{d+i} + e_{d-i}) - 2e_{d}$ and $\omega_j = (e_{5d+j} + e_{5d-j}) - 2e_{5d}$, 
\[
\sum_{i\in I}\tau_{i}-\sum_{j\in I}\omega_{j}=(m-3)e_{5d}-(m-3)e_d = (m - 3)(e_{5d} - e_d) \in L_N
\] 
since $e_i$ appears exactly once for each $i \ne d, 5d$.  By Corollary \ref{cor-congruence-problem}, $F$ is not Ap\'ery.
\end{proof}

\begin{remark}\label{r:nonaperyray}
Theorem~\ref{t:attainable}(b) exhibits faces of the Kunz cone described in Proposition~\ref{p:nonapery}(a).  The Kunz nilsemigroup with divisibility poset depicted in Figure~\ref{f:nonaperyray} corresponds to a face described by Proposition~\ref{p:nonapery}(b); it is not difficult to construct similar nilsemigroups for any even $m \ge 8$.
\end{remark}



\begin{figure}[t!]
\begin{center}
\begin{subfigure}[b]{0.18\textwidth}
\begin{center}
\includegraphics[scale=0.25]{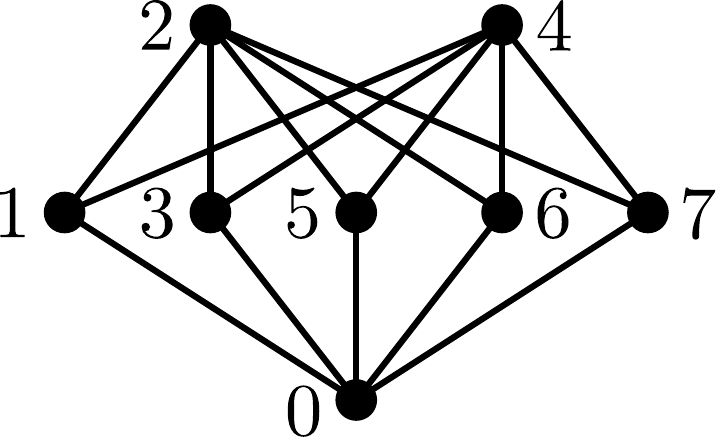}
\end{center}
\caption{}
\label{f:nonaperyray}
\end{subfigure}
\hspace{0.04\textwidth}
\begin{subfigure}[b]{0.19\textwidth}
\begin{center}
\includegraphics[scale=0.25]{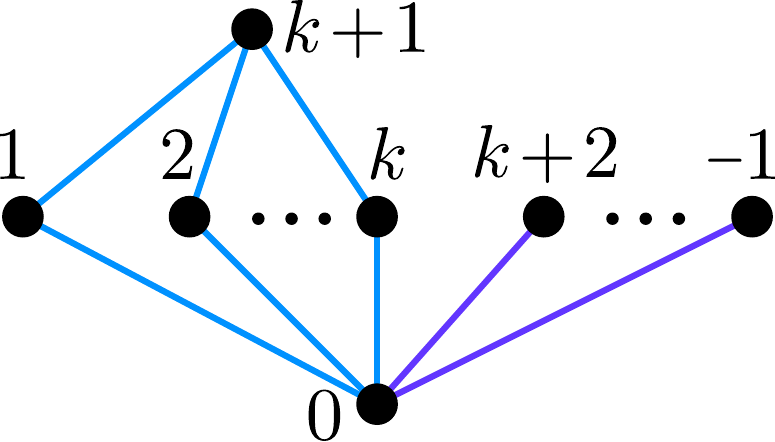}
\end{center}
\caption{}
\label{f:attainablea}
\end{subfigure}
\hspace{0.04\textwidth}
\begin{subfigure}[b]{0.18\textwidth}
\begin{center}
\includegraphics[scale=0.25]{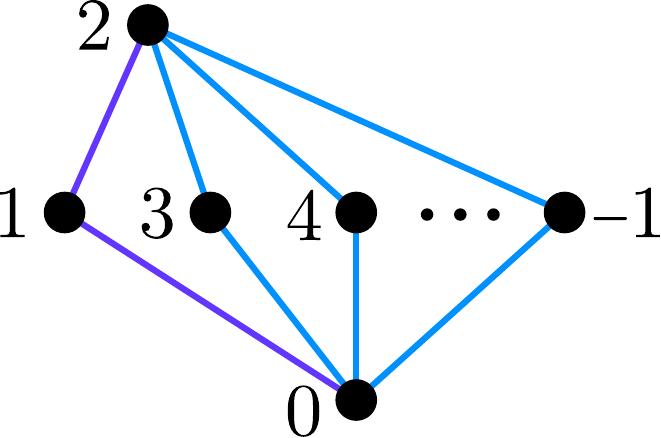}
\end{center}
\caption{}
\label{f:attainableb}
\end{subfigure}
\hspace{0.04\textwidth}
\begin{subfigure}[b]{0.27\textwidth}
\begin{center}
\includegraphics[scale=0.25]{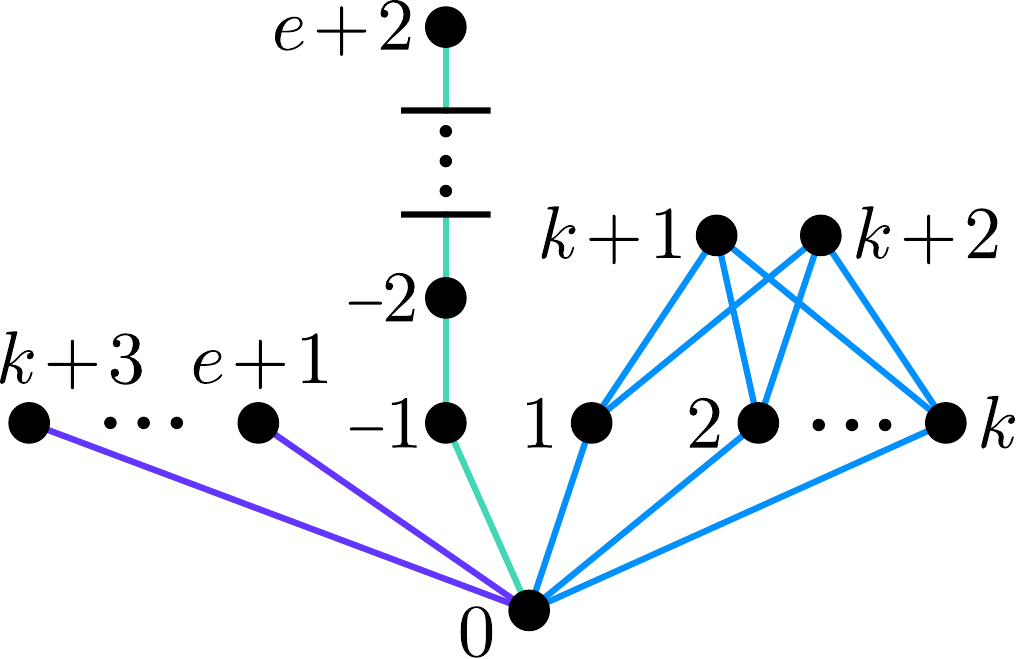}
\end{center}
\caption{}
\label{f:attainablec}
\end{subfigure}
\end{center}
\caption{Divisibility poset of a Kunz nilsemigroup corresponding to a non-Ap\'ery face with dimension $1$, along with depictions of the three constructions for Theorem \ref{t:attainable}.}
\label{f:attainable}
\end{figure}


\begin{thm}\label{t:attainable}
Fix $m \geq 7$.  There is a face $F \subseteq \Cm$ with $\mathsf e(F) = e$ and $\dim F = d$ if:
\begin{enumerate}[(a)]
\item 
$e = m - 1$ and $d = m-1$;

\item 
$e = m-2$ and $\dfloor \le d \le m-2$; or

\item
$2 \leq e \leq m-3$ and $d \in [2,e]$.

\end{enumerate}
Each such face $F$ can be chosen Ap\'ery, except where impossible by Proposition~\ref{p:nonapery}.  
\end{thm}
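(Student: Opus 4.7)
The plan is to produce, for each attainable pair $(e,d)$, a partly cancellative Kunz nilsemigroup $N$ with $\mathsf e(N) = e$ whose presentation matrix has rank $e - d$, so that the corresponding non-degenerate face $F \subseteq \Cm$ has $\dim F = d$ by Theorem~\ref{t:dimension}. Ap\'ery-ness is then established, whenever possible, by showing that the presentation lattice $L_N$ is saturated and invoking Corollary~\ref{c:aperylattices}(a).

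Part~(a) is immediate: let $N$ have $m - 1$ atoms with every non-trivial sum nil, so that $F = \Cm^\circ$ has $\mathsf e(F) = \dim F = m - 1$ and contains the numerical semigroup $\langle m, m+1, \ldots, 2m-1\rangle$. For part~(b), fix $d$ in the stated range and set $k = m - 1 - d$. Select a single non-atom $p \in \ZZ_m$ and $k$ factorizations of $p$ of coordinate sum $2$ with pairwise disjoint supports (possibly including one singleton $2e_a$ when $2a \equiv p$); declare all other sums of atoms nil. The support disjointness ensures $\rk M_\rho = k - 1$, so $\dim F = (m-2) - (k-1) = d$. A short direct check shows that $L_N$ is saturated whenever at most one singleton factorization is used, in which case Corollary~\ref{c:aperylattices}(a) yields that $F$ is Ap\'ery. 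The only unavoidable obstruction arises when $m$ is even and $d = m/2 - 1$, forcing $k = m/2$ and hence inclusion of both singletons $2e_a$ and $2e_{a + m/2}$ --- exactly the non-Ap\'ery case of Proposition~\ref{p:nonapery}(a).

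For part~(c), the construction generalizes by distributing the $e - d$ extra factorizations across the $j = m - 1 - e \geq 2$ non-atoms of $N$. Precisely, assign each non-atom $p_i$ a number $k_i \geq 1$ of coordinate-sum-$2$ factorizations with pairwise disjoint supports, chosen so that $\sum(k_i - 1) = e - d$. The upper bound $k_i \leq \lfloor e/2 \rfloor + 1$ from Lemma~\ref{l:coordsum}(a) is the binding combinatorial constraint, and is always satisfiable for $d \geq 2$ --- for instance, by concentrating factorizations on a single non-atom (as in part~(b)) and then spreading any remaining factorizations one-per-non-atom. Figure~\ref{f:attainablec} illustrates this construction. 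Avoiding paired singletons on each non-atom (possible throughout the stated range of $(e,d)$) keeps $L_N$ saturated, so Corollary~\ref{c:aperylattices}(a) makes $F$ Ap\'ery.

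The main obstacle is combinatorial: one must select atom residues in $\ZZ_m$ so that the designated sums fall into the prescribed non-atom residue classes, while no unintended atom-sum collides with an atom or with a non-designated non-atom (otherwise partial cancellativity or the declared nil-structure would fail). For $m \geq 7$ this is feasible with a careful choice of residues; once the nilsemigroup is realized, partial cancellativity and the rank of $M_\rho$ follow from the disjoint-support property, and saturation of $L_N$ is a routine lattice check on the trade generators, using that each trade has entries in $\{-1,0,1,2,-2\}$ and that no pair of singletons $\{2e_a, 2e_b\}$ coexists in any $\mathsf Z_N(p_i)$.
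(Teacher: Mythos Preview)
Your parts~(a) and~(b) are close in spirit to the paper's, but the direction is reversed in a way that leaves a real gap. The paper exhibits an explicit point $x\in\Cm$ (entries $2,\ldots,2,4,3,\ldots,3$ for~(b), and $1,2,1,\ldots,1$ for the non-Ap\'ery $d=\tfrac{m}{2}-1$ case) and reads the Kunz nilsemigroup off the face $F$ with $x\in F^\circ$; existence and non-degeneracy of $F$ are then automatic. You instead posit a nilsemigroup and assert it corresponds to a face, but never verify this, and not every partly cancellative nilsemigroup on $\ZZ_m\cup\{\infty\}$ arises as the Kunz nilsemigroup of a face. Your final paragraph also misidentifies the obstacle: one may declare $a\oplus b=\infty$ even when $a+b$ coincides with another atom in $\ZZ_m$, so residue ``collisions'' are not the issue --- what is needed is a point $x$ witnessing the required pattern of equalities and strict inequalities, and that is exactly what you do not supply. (Your saturation argument for~(b), by contrast, matches the paper's: disjoint supports give $M_\rho=[I\mid A]$ after a column permutation.)

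Part~(c) has a further, concrete gap: insisting that every non-atom have only coordinate-sum-$2$ factorizations cannot work for small~$e$. Already for $e=d=2$ and $m\ge 7$ there are two atoms and hence at most three coordinate-sum-$2$ expressions in total, yet $m-3\ge 4$ non-atoms each require at least one. The paper's construction for~(c) is accordingly quite different from your sketch. For $d=2$ it invokes the arithmetical semigroups $\langle m,m+1,\ldots,m+e\rangle$ and \cite[Theorem~4.2]{kunzfaces2} directly. For $d\ge 3$ it builds an explicit $x\in\Cm$ whose Kunz nilsemigroup has two non-atoms $k+1,k+2$ carrying all the trades (much as in~(b)), together with a chain of additional non-atoms $e+2,\ldots,m-2$, each equal to the previous plus the single atom $m-1$. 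The chain elements have unique factorizations and hence lower $\mathsf e(F)$ without contributing trades, so $M_\rho=[M_{\rho'}\mid 0]$ where $\rho'$ presents the Kunz nilsemigroup of the arithmetical semigroup $\langle k+3,\ldots,2k+3\rangle$ inside $\mathcal C_{k+3}$; both $\rk M_\rho$ and the saturation of $L_N$ are then inherited from that smaller, already-understood case.
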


\begin{proof}
The face $F = \Cm$, which has the numerical semigroup $S = \<m, m+1,\ldots, 2m-1\>$, verifies part~(a).  
Next, for each $k = 1, \ldots, m-2$, consider the point $x \in \ZZ_{\ge 0}^{m-1}$ with 
\[ 
x_i =
\begin{cases} 
2 & \text{if } i = 1, \ldots, k; \\
4 & \text{if } i = k+1; \\
3 & \text{if } i = k+2, \ldots, m-1.
\end{cases}
\]
Since $x_i + x_j \ge x_k$ for any $i, j, k \in \ZZ_m \setminus \{0\}$, we have $x \in \Cm$.  Moreover, if $x_i + x_j = x_{i+j}$, then $x_i = x_j = 2$ and $x_{i+j} = 4$, which necessitates $i \in [1,k]$ and $j = k + 1 - i$.  As~such, the Kunz nilsemigroup $N$ of the face $F \subseteq \Cm$ with $x \in F^\circ$ has divisibility poset depicted in Figure~\ref{f:attainablea}, so $\dim F = e - (|\mathsf Z_N(k+1)| - 1) = m - 2 - \lfloor \tfrac{1}{2}(k-1) \rfloor$ by Theorem~\ref{t:dimension}.  Additionally, $k + 1$~has at most one singleton-support factorization, so letting $\hat z \in \mathsf Z_N(k+1)$ denote a factorization of $k+1$ with minimal support, 
\[
\rho = \{(\hat{z},z):z \in \mathsf{Z}_N(k+1), \, z \ne \hat z\}
\]
is a presentation of $N$.  Since each factorization $z$ above has support 2, after permuting rows and columns $M_\rho = [I \mid A]$ for some matrix $A$, meaning $L_N$ is saturated.  
%
%
%

As we vary $k$ in the above construction, we obtain faces whose dimension $d$ has $\lfloor \tfrac{m}{2}\rfloor \le d \le m-2$.  If $m$ is odd, $\lfloor \tfrac{m}{2}\rfloor = \tfrac{1}{2}(m-1)$, so to prove part~(b), it remains to assume $m$ is even and construct a face $F$ with $\dim F = \tfrac{1}{2}m - 1$.  To this end, 
consider the point $x \in \ZZ_{\ge 0}^{m-1}$ with
\[ 
x_i =
\begin{cases} 
1 & \text{if } i = 1; \\
2 & \text{if } i = 2; \\
1 & \text{if } i = 3, \ldots, m-1.
\end{cases}
\]
As before, it is clear $x \in \Cm$, and $x_i + x_j = x_{i+j}$ only preicsely when $i + j = 2$, so the Kunz nilsemigroup of the face $F \subseteq \Cm$ with $x \in F^\circ$ has divisibility poset depicted in Figure~\ref{f:attainableb}, and thus $\dim F = e - (|\mathsf Z_N(2)| - 1) = \tfrac{1}{2}m - 1$ by Theorem~\ref{t:dimension}.  As $F$ satisfies Proposition~\ref{p:nonapery}(a), part~(b) is proved.  
%
%

This leaves part~(c).  First, if $d = 2$, then by \cite[Theorem~4.2]{kunzfaces2}, the numerical semigroup $S = \<m,m+1, \ldots, m + e\>$ lies in a 2-dimensional face of $\Cm$ for $e \in [2, m - 3]$, so it suffices to consider $e \ge d \ge 3$.  To this end, fix $e \in [3, m-3]$ and fix $k \in [2, e - 1]$, 
let $h = m - e - 1$, and consider the point $x \in \ZZ_{\ge 0}^{m-1}$ with 
\[ 
x_i =
\begin{cases} 
h & \text{if } i = 1, \ldots, k; \\
2h & \text{if } i = k+1,k+2; \\
2h-1 & \text{if } i = k+3, \ldots, e+1; \\
2(m - i) & \text{if } i = e+2, \ldots, m-1.
\end{cases}
\]
Certainly $x_i + x_j \ge x_{i+j}$ whenever $x_i \ge h$ and $x_j \ge h$, and it is not hard to check that $x_{-1} + x_j = 2 + x_j \ge x_{j-1}$ for each $j$.  As such, if $x_i < h$, then $i > e + 2$, so 
\[
x_i + x_j = (m - i)x_{-1} + x_j \ge (m - i - 1)x_{-1} + x_{j-1} \ge \cdots \ge x_{i+j}.
\]
This proves $x \in \Cm$.  Moreover, $x_i + x_j = x_{i+j}$ only occurs when (i) $i \in [1,k]$ and either $j = k+1 - i$ or $k + 2 - i$, or (ii) $i, j, i + j \in \{e + 2, \ldots, m-1\}$.  In particular, the Kunz nilsemigroup of the face $F \subseteq \Cm$ with $x \in F^\circ$ has divisibility poset depicted in Figure~\ref{f:attainablec}.  As such, $\mathsf e(F) = k + ((e + 1) - (k + 2)) + 1 = e$.  To obtain $\dim F$, 
consider the face $F' \subseteq \mathcal C_{k+3}$ containing the arithmetical numerical semigroup
\[
S = \<k+3, k+4, k+5, \ldots, 2k+3\>.
\]
The divisibility poset of the Kunz nilsemigroup $N'$ of $F'$ is identical to the divisibility poset depicted in Figure~\ref{f:attainablec} restricted to the elements in $\{0, 1, \ldots, k+2\}$.  As such, there exist presentations $\rho$ and $\rho'$ of $N$ and $N'$, respectively, such that 
\[
M_\rho=[M_{\rho'} \mid 0 \, ]
\qquad \text{and thus} \qquad
L_N = L_{N'} \times \{0\}^{e-k}.
\]
In particular, $L_N$ is saturated since $L_{N'}$ is, and $\rk(M_\rho) = \rk(M_{\rho'})$, so 
\[
\dim F
= e - \rk(M_\rho)
= e - \rk(M_{\rho'})
= e - (k - \dim F')
= e - k + 2,
\]
by Theorem~\ref{t:dimension} and \cite[Theorem~4.2]{kunzfaces2}.  
\end{proof}


\begin{thebibliography}{HHHKR10}
\raggedbottom

\bibitem{alhajjarkunz}
E.~Alhajjar, R.~Russell, and M.~Steward,
\emph{Numerical semigroups and Kunz polytopes},
Semigroup Forum 99 (2019), no.~1, 153--168.


\bibitem{numericalappl}
A.~Assi and P.~Garc\'ia-S\'anchez,
\emph{Numerical semigroups and applications},
RSME Springer Series, 1.~Springer, [Cham], 2016.

\bibitem{kunzfaces2}
J.~Autry, A.~Ezell, T.~Gomes, C.~O'Neill, C.~Preuss, T.~Saluja, and E.~Torres-Davila, 
\emph{Numerical semigroups, polyhedra, and posets II:\ locating certain families of semigroups},
Advances in Geometry \textbf{22} (2022), no.~1, 33--48.  
Available at \textsf{arXiv:1912.04460}.

\bibitem{modulispaces}
R.~Bezrukavnikov, A.~Braverman, and Z.~Yun, editors, 
\emph{Geometry of Moduli Spaces and Representation Theory}, 
Vol.~24, American Mathematical Soc., 2017.

\bibitem{wilfbruteforce}
M.~Bras-Amor\'os,
\emph{Fibonacci-like behavior of the number of numerical semigroups of a given genus},
Semigroup Forum 76 (2008), no.~2, 379--384. 

\bibitem{wilfmultiplicity}
W.~Bruns, P.~Garc\'ia-S\'anchez, C.~O'Neill, and D.~Wilburne,
\emph{Wilf's conjecture in fixed multiplicity}, 
International Journal of Algebra and Computation \textbf{30} (2020), no.~4, 861--882.


\bibitem{wilfsurvey}
M.~Delgado, 
\emph{Conjecture of Wilf:\ a survey},
Numerical Semigroups:\ IMNS 2018 (2020), 39--62.  

\bibitem{numericalsgpsgap}
M.~Delgado, P.~Garc\'ia-S\'anchez, and J.~Morais, 
\emph{NumericalSgps, A package for numerical semigroups}, 
Version 1.3.1 (2022), (Refereed GAP package),
\url{https://gap-packages.github.io/numericalsgps/}.

\bibitem{minprescard}
C.~Elmacioglu, K.~Hilmer, H.~Koufmann, M.~Okandan, and C.~O'Neill,
\emph{On the cardinality of minimal presentations of numerical semigroups},
preprint.  
Available at \textsf{arXiv:2211.16283}.

\bibitem{kunzposetsage}
T.~Gomes, C.~O'Neill, C.~Preuss, and E.~Torres Davila, 
\emph{KunzPoset}
(Sage software),
\url{https://github.com/coneill-math/kunzpolyhedron}.

\bibitem{kunzfaces3}
T.~Gomes, C.~O'Neill, and E.~Torres-Davila, 
\emph{Numerical semigroups, polyhedra, and posets III:\ minimal presentations and face dimension},
Electronic Journal of Combinatorics  \textbf{30} (2023), no.~2, \#P2.57.  

\bibitem{kaplanwilfconj}
N.~Kaplan,
\emph{Counting numerical semigroups by genus and some cases of a question of Wilf}
J.~Pure Appl.~Algebra 216 (2012), no.~5, 1016--1032.

\bibitem{kaplancounting}
N.~Kaplan,
\emph{Counting numerical semigroups},
Amer.\ Math.\ Monthly 124 (2017), no.~9, 862--875.

\bibitem{kunzfaces1}
N.~Kaplan and C.~O'Neill, 
\emph{Numerical semigroups, polyhedra, and posets I:\ the group cone},
Combinatorial Theory \textbf{1} (2021), \#19.

\bibitem{kunz}
E.~Kunz,
\emph{\"Uber die Klassifikation numerischer Halbgruppen},
Regensburger Mathematische Schriften \textbf{11}, 1987.


\bibitem{cca}
E.~Miller and B.~Sturmfels,
\emph{Combinatorial commutative algebra},
Graduate Texts in Mathematics, vol.~227, Springer-Verlag, New York, 2005.

\bibitem{numericalsgpssage}
C.~O'Neill, 
\emph{numsgps-sage}
(Sage software), 
\url{https://github.com/coneill-math/numsgps-sage}.


\bibitem{rosalesapery}
J.~Rosales, 
\emph{Numerical semigroups with Ap\'ery sets of unique expression},
J.~Algebra 226 (2000), no.~1, 479--487.

\bibitem{highembdim}
J.~Rosales and P.~García-Sánchez, 
\emph{On numerical semigroups with high embedding dimension},
J.~Algebra 203 (1998), no.~2, 567--578.

\bibitem{kunzcoords}
J.~Rosales, P.~Garc\'ia-S\'anchez, J.I.~Garc\'ia-Garc\'ia and M.~Branco,
\emph{Systems of inequalities and numerical semigroups},
J.~Lond.\ Math.\ Soc.\ \textbf{65} (2002), no.~3, 611--623.

\bibitem{nsbettisurvey}
D.~Stamate, 
\emph{Betti numbers for numerical semigroup rings},
Multigraded algebra and applications, 133--157,
Springer Proc.\ Math.\ Stat., 238, Springer, Cham, 2018.

\bibitem{WJK13}
D.~Wilding, M.~Johnson, and M.~Kambites,
\emph{Exact rings and semirings},
J.~Algebra 388 (2013), 324--337.

\bibitem{wilfconjecture}
H.~Wilf, 
\emph{A circle-of-lights algorithm for the ``money-changing problem''},
Amer.~Math.~Monthly \textbf{85} (1978), no.~7, 562--565. 

\bibitem{ziegler}
G.~Ziegler,
\emph{Lectures on polytopes},
Graduate Texts in Mathematics Vol.~152, Springer-Verlag, New York, 1995.

\end{thebibliography}
\end{document}